\documentclass[12pt]{amsart}
\usepackage{SSdefn}
\setcounter{tocdepth}{1}
\usepackage{eucal, mathtools}
\usepackage{tikz}
\usepackage{tikz-cd}

\newcommand{\FB}{\mathbf{FB}}

\newcommand{\FI}{\mathbf{FI}}

\newcommand{\VI}{\mathbf{VI}}

\newcommand{\VB}{\mathbf{VB}}
\newcommand{\GrVec}{\mathrm{GrVec}}
\newcommand{\cor}{\mathrm{cor}}

\DeclarePairedDelimiter\abs{\lvert}{\rvert}

\title[Periodicity in the cohomology of finite general linear groups]{Periodicity in the cohomology of finite general\\ linear groups via $q$-divided powers}

\date{November 14, 2020}

\author{Rohit Nagpal}
\address{Department of Mathematics, University of Michigan, Ann Arbor, MI}
\email{\href{mailto:rohitna@umich.edu}{rohitna@umich.edu}}
\urladdr{\url{http://www-personal.umich.edu/~rohitna/}}

\author{Steven V Sam}
\address{Department of Mathematics University of California, San Diego, CA}
\email{\href{mailto:ssam@ucsd.edu}{ssam@ucsd.edu}}
\urladdr{\url{http://math.ucsd.edu/~ssam/}}

\author{Andrew Snowden}
\address{Department of Mathematics, University of Michigan, Ann Arbor, MI}
\email{\href{mailto:asnowden@umich.edu}{asnowden@umich.edu}}
\urladdr{\url{http://www-personal.umich.edu/~asnowden/}}

\thanks{RN was partially supported by NSF DMS-1638352. SS was partially supported by NSF DMS-1849173 and a Sloan Fellowship. AS was supported by NSF DMS-1453893.}

\begin{document}

\begin{abstract}
We show that $\bigoplus_{n \ge 0} \rH^t(\GL_{n}(\bF_q), \bF_{\ell})$ canonically admits the structure of a module over the $q$-divided power algebra (assuming $q$ is invertible in $\bF_{\ell}$), and that, as such, it is free and (for $q \neq 2$) generated in degrees $\le t$. As a corollary, we show that the cohomology of a finitely generated $\VI$-module in non-describing characteristic is eventually periodic in $n$. We apply this to obtain a new result on the cohomology of unipotent Specht modules. 
\end{abstract}

\maketitle

\section{Introduction}

\subsection{Background}

Fix a field $\bk$ of positive characteristic $\ell$, and let $\bE^i_n = \rH^i(\fS_n)$ be the $i$th cohomology of the symmetric group $\fS_n$ with coefficients in $\bk$. The space $\bE=\bigoplus_{i,n} \bE^i_n$ admits the structure of a bi-graded algebra: the multiplication map (which is called the {\bf transfer product}) is the composite
\begin{displaymath}
\rH^i(\fS_n) \times \rH^j(\fS_m) \to \rH^{i+j}(\fS_n \times \fS_m) \to \rH^{i+j}(\fS_{n+m}),
\end{displaymath}
where the first map comes from the K\"unneth isomorphism and the second is corestriction. Let $d \colon \bE^i_n \to \bE^i_{n-1}$ be the restriction map along the inclusion $\fS_{n-1} \subset \fS_n$. We recall two well-known results:
\begin{itemize}
\item A theorem of Nakaoka \cite{nakaoka} asserts that $d$ is an isomorphism for $n>2i$.
\item A theorem of Dold \cite[Lemma~1,2, Theorem~1]{dold} asserts that $d$ is a derivation of $\bE$.
\end{itemize}
Let $\bE^i=\bigoplus_{n \ge 0} \bE^i_n$, and let $\bD=\bE^0$. Then $\bD$ is a graded algebra, which is none other than the divided power algebra over $\bk$ in a single variable, and each $\bE^i$ is a $\bD$-module.

In \cite{periodicity}, the first and third authors reinterpreted the above two results as follows: Dold's theorem implies that $d$ defines a connection on $\bE^i$ (relative to the derivation $d$ on $\bD$); and Nakaoka's theorem implies that the kernel of this connection is supported in degrees $\le 2i$. As a consequence, we find that $\bE^i$ is a free $\bD$-module generated in degrees $\le 2i$. We applied this point of view to study the cohomology of $\FI$-modules. We showed that if $M$ is a finitely generated $\FI$-module and $i \ge 0$ then $\bigoplus_{n \ge 0} \rH^i(\fS_n, M_n)$ is canonically a $\bD$-module, and is ``nearly'' finitely presented, in the sense that it agrees with a finitely presented $\bD$-module outside of finitely many degrees. As a corollary, we reproved the main result of \cite{nagpal} that $n \mapsto \dim \rH^i(\fS_n, M_n)$ is eventually periodic in $n$.

\subsection{Main results}

The purpose of this paper is to generalize the above picture to finite general linear groups. The analog of Nakaoka's theorem in this setting follows from classical work of Quillen \cite{quillen, quillen-notebook}. Our main theorem is the analog of Dold's theorem, which appears to be new. Once this result is established, the methods of \cite{periodicity} give us periodicity results for the cohomology of $\VI$-modules.

We now state our results precisely. Let $q$ be a prime power that is invertible in $\bk$, and let $\bG_n=\GL_n(\bF_q)$ be the finite general linear group over the field of $q$ elements. Let $\bE_n^i=\rH^i(\bG_n)$ be the cohomology of $\bG_n$ with coefficients in $\bk$. Once again, this is an algebra: the multiplication map is the composite
\begin{displaymath}
\rH^i(\bG_n) \times \rH^j(\bG_m) \to \rH^{i+j}(\bG_n \times \bG_m) \to \rH^{i+j}(\bP_{n,m}) \to \rH^{i+j}(\bG_{n+m}), 
\end{displaymath}
where the first map comes from the K\"unneth isomorphism, the second map is restriction to the parabolic $\bP_{n,m} \subset \bG_{n+m}$ with Levi factor $\bG_n \times \bG_m$, and the third map is corestriction. This operation is poorly understood, but can be useful: for example, \cite{lahtinen-sprehn} have used it to construct nontrivial cohomology classes in the case when the coefficient field has characteristic $p$. We let $d \colon \bE_n^i \to \bE_{n-1}^i$ be the composite
\begin{displaymath}
\rH^i(\bG_n) \to \rH^i(\bP_{n-1,\ol{1}}) \to \rH^i(\bG_{n-1})
\end{displaymath}
where $\bP_{n-1,\ol{1}}$ denote the subgroup of $\bP_{n-1,1}$ consisting of matrices whose last diagonal entry is 1, the first map is restriction, and the second is the canonical isomorphism (as $q$ is invertible in $\bk$, the inflation map is an isomorphism). Our main theorem is then:

\begin{theorem}
\label{thm:q-construction}
The map $d$ is a (right) $q$-derivation on $\bE$, that is, for $x \in \bE_n$ and $y \in \bE_m$ we have
\begin{displaymath}
d(xy)=xd(y) + q^m d(x) y. 
\end{displaymath} Moreover, $d$ is surjective.
\end{theorem}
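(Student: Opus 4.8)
The plan is to unwind both sides into their definitions and evaluate $d(xy)$ using Mackey's double coset formula. Put $N=n+m$. By definition $xy=\cor^{\bG_N}_{\bP_{n,m}}\bigl(\mathrm{infl}^{\bP_{n,m}}_{\bG_n\times\bG_m}(\kun(x\otimes y))\bigr)$ and $d=\iota^{-1}\circ\mathrm{res}^{\bG_N}_{\bP_{N-1,\overline{1}}}$, where $\iota\colon\rH^\bullet(\bG_{N-1})\xrightarrow{\ \sim\ }\rH^\bullet(\bP_{N-1,\overline{1}})$ is the inflation isomorphism (an isomorphism since $q$ is invertible in $\bk$). A first observation is that $\iota^{-1}$ is simply restriction to the Levi complement $\bG_{N-1}\subset\bP_{N-1,\overline{1}}$, so $d$ is the ordinary restriction map $\mathrm{res}^{\bG_N}_{\bG_{N-1}}$ along the standard block embedding; with this in hand, the surjectivity of $d$ is the classical surjectivity of the restriction maps in the tower $\bG_0\subset\bG_1\subset\cdots$, which follows from Quillen's description of $\rH^\bullet(\bG_n;\bk)$.

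For the $q$-derivation identity the key combinatorial point is that the double coset space $\bP_{N-1,\overline{1}}\backslash\bG_N/\bP_{n,m}$ has exactly two elements. Indeed $\bP_{N-1,\overline{1}}=\mathrm{Stab}(e_N^*)$ and $\bP_{n,m}=\mathrm{Stab}(V_n)$ with $V_n=\langle e_1,\dots,e_n\rangle$, so these double cosets biject with the $\bP_{n,m}$-orbits on nonzero linear functionals on $\bF_q^N$; there are exactly two such orbits -- the functionals vanishing on $V_n$ and those not -- as one checks by a short count using the Levi and the unipotent radical of $\bP_{n,m}$. Take as representatives $g=1$ and $g=w$, the transposition $(1,N)$. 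Mackey's formula then writes $\mathrm{res}^{\bG_N}_{\bP_{N-1,\overline{1}}}\circ\cor^{\bG_N}_{\bP_{n,m}}$ as a sum of two terms, the term attached to $g$ being a corestriction--conjugation--restriction through $\bP_{N-1,\overline{1}}\cap g\,\bP_{n,m}\,g^{-1}$.

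In the $g=1$ term the relevant subgroup is $\bP_{n,m}\cap\bP_{N-1,\overline{1}}=\bP_{n,m-1,\overline{1}}$, a partial parabolic; restricting the inflated class $\kun(x\otimes y)$ to it yields the inflation of $\kun(x\otimes d(y))$ -- the factor $d(y)$ arising because the pertinent restriction of $y$ equals $\iota_{m-1}(d(y))$ up to an inner automorphism, which acts trivially on cohomology -- and, using compatibility of corestriction with inflation along a square of groups with a common normal subgroup, this term unwinds to $\iota(x\,d(y))$. In the $g=w$ term the factor $q^m$ appears: after conjugating by $w$ the relevant subgroup lies inside the preimage in $\bP_{N-1,\overline{1}}$ of the parabolic $\bP_{n-1,m}\subset\bG_{N-1}$, with index exactly $q^{N-1}/q^{n-1}=q^m$; since the relevant class is inflated from the common Levi quotient $\bG_{n-1}\times\bG_m$, and since $\cor\circ\mathrm{res}$ is multiplication by the index on cohomology, this corestriction contributes the factor $q^m$, and the term unwinds to $q^m\iota(d(x)\,y)$. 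Applying $\iota^{-1}$ and summing the two terms gives $d(xy)=x\,d(y)+q^m\,d(x)\,y$.

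The main obstacle is the bookkeeping inside the two Mackey terms -- making each intermediate subgroup explicit, verifying that the classes in question are really inflated from the relevant Levi (so that the inflation/corestriction compatibilities apply), and pinning down the index $q^m$. A useful consistency check, which via the $q$-Pascal recurrence $\binom{N}{m}_q=\binom{N-1}{m-1}_q+q^m\binom{N-1}{m}_q$ essentially forces the exponent, is the case $x=1_n$, $y=1_m$: here $xy=\binom{n+m}{m}_q\,1_{n+m}$ and every corestriction in sight is multiplication by an index.
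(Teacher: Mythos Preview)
Your proposal is correct and follows essentially the same route as the paper's direct proof: both compute $\mathrm{res}^{\bG_N}_{\bP_{N-1,\overline{1}}}\circ\cor^{\bG_N}_{\bP_{n,m}}$ via Mackey's formula over the two-element double coset space $\bP_{N-1,\overline{1}}\backslash\bG_N/\bP_{n,m}$, with the identity coset yielding $x\,d(y)$ and the nontrivial one yielding $q^m\,d(x)\,y$ through exactly the index-$q^m$ inflation/corestriction argument you sketch (isolated in the paper as Lemma~\ref{lem:coeffaceability}, with $|N_2/N_1|=q^{N-1}/q^{n-1}=q^m$). The only cosmetic differences are that the paper uses the transposition $(n,N)$ rather than your $(1,N)$ as the nontrivial representative (which makes the intersection the tidier $\bP^{(2,3)}_{n-1,m,\overline{1}}$ rather than a conjugate of it), parametrizes the double cosets by $n$-dimensional subspaces rather than by nonzero functionals, and deduces surjectivity from the derivation identity together with Proposition~\ref{prop:connection} rather than from Quillen---though the paper's remark immediately after the theorem notes that your Quillen route also works.
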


\begin{remark}
  The fact that $d$ is surjective can be deduced from \cite{quillen}, namely using \cite[Theorem 4]{quillen} which describes a basis for $\rH^i(\bG_n)$ using characteristic classes of the standard representation, and \cite[Proposition 3]{quillen} which describes how these characteristic classes behave under the restriction $\bG_n \to \bG_{n-1}$.
\end{remark}

Let $\bD=\bE^0$, which is a subalgebra of $\bE$. It is isomorphic to the $q$-divided power algebra over $\bk$ in one variable (see \S \ref{s:qdp}). This isomorphism can be obtained using the method in \cite[Proposition~11]{lahtinen-sprehn}. Alternatively, a more formal proof in the $q=1$ case is given in \cite[Proposition~4.3]{periodicity}, which easily extends to the general case. Under this isomorphism, $d$ corresponds to the derivation of $\bD$ given by $x^{[n]} \mapsto x^{[n-1]}$.

The space $\bE^i$ is naturally a $\bD$-module, and $d$ induces a connection on it. We obtain the following corollary by combining the theorem above and the homological stability for finite general linear groups. 

\begin{corollary}	\label{cor:free-D-module}
The space $\bE^i$ is a graded free $\bD$-module generated in degrees $\le 2i$. In fact, if $q \neq 2$ then it is generated in degrees $\le i$.
\end{corollary}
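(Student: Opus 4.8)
The plan is to follow the strategy of \cite{periodicity}. Fix $i$ and write $M=\bE^i$. By Theorem~\ref{thm:q-construction}, $d$ makes $M$ into a graded $\bD$-module equipped with a degree $-1$ connection: for $a\in\bD$ and $m\in M$ of degree $\deg m$ we have $d(am)=a\,d(m)+q^{\deg m}(da)\,m$, where on the right $d$ denotes the derivation $x^{[n]}\mapsto x^{[n-1]}$ of $\bD$; moreover $d\colon M_n\to M_{n-1}$ is surjective for every $n$, and each $M_n=\rH^i(\bG_n)$ is finite-dimensional since $\bG_n$ is finite. The module-theoretic heart of the argument is to show that any such $M$ is a free $\bD$-module; homological stability then enters only to bound the degrees of a generating set.

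Set $K_n=\ker(d\colon M_n\to M_{n-1})$ and $K=\bigoplus_n K_n$. Surjectivity of $d$ together with rank--nullity gives $\dim M_n=\sum_{b=0}^n\dim K_b$ for all $n$. Form the free graded $\bD$-module $F=\bD\otimes_\bk K$ with its total grading, and equip it with the operator $\partial(a\otimes v)=q^{\deg v}(da)\otimes v$; using that $d$ is a $q$-derivation on $\bD$, one checks that $\partial$ satisfies the same connection identity. The $\bD$-linear map $\varphi\colon F\to M$, $a\otimes v\mapsto av$, then satisfies $\varphi\circ\partial=d\circ\varphi$, since $dv=0$ for $v\in K$. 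To prove $\varphi$ is injective, pick a nonzero homogeneous $\xi\in\ker\varphi$ of minimal degree; then $\partial\xi\in\ker\varphi$ lies in strictly smaller degree, so $\partial\xi=0$, and since $q$ is invertible in $\bk$ and $\ker(d\colon\bD\to\bD)=\bD_0$ this forces $\xi\in 1\otimes K$, whereupon $\varphi(\xi)=0$ contradicts the definition of $K$. Since the graded pieces of $\bD$ are one-dimensional, $\dim F_n=\sum_{b\le n}\dim K_b=\dim M_n$, so the injection $\varphi$ is an isomorphism; thus $\bE^i$ is a free $\bD$-module with one generator in degree $b$ for each element of a basis of $K_b$.

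It remains to bound the support of $K$, and this is the only place homological stability is used. Since $d\colon M_n\to M_{n-1}$ is already surjective for every $n$, we have $K_n=0$ as soon as this map is also injective; by universal coefficients over the field $\bk$, injectivity of $d$ in degree $n$ is equivalent to surjectivity of the stabilization map $\rH_i(\bG_{n-1})\to\rH_i(\bG_n)$. Quillen's homological stability theorem for the finite general linear groups gives this surjectivity for $n-1\ge 2i$, so $K_n=0$ for $n>2i$; when $q\neq 2$ the improved (slope-one) stability range gives the surjectivity already for $n-1\ge i$, so $K_n=0$ for $n>i$. Combined with the previous paragraph, $\bE^i$ is free and generated in degrees $\le 2i$, and in degrees $\le i$ when $q\neq 2$.

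The module-theoretic core --- constructing $\varphi$ and running the minimal-degree injectivity argument --- is essentially formal once the $q$-derivation property of $d$ is available. I expect the main obstacle to be extracting homological stability in precisely the form needed: what one uses is the \emph{epimorphism} half of stability, which together with the surjectivity of $d$ from Theorem~\ref{thm:q-construction} forces $d$ to be an isomorphism in the appropriate range; obtaining the sharp bound $\le i$ for $q\neq2$ requires identifying the correct improved stability theorem and keeping track of its off-by-one constant.
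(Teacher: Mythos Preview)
Your argument is correct and follows essentially the same route as the paper: the $q$-connection from Theorem~\ref{thm:q-construction} forces $\bE^i\cong \bD\otimes_\bk\ker(d)$ (the paper packages this as Proposition~\ref{prop:connection} via a Taylor-expansion map, whereas you reprove it inline by the minimal-degree injectivity plus dimension-count argument), and then homological stability bounds the support of $\ker(d)$. The only cosmetic discrepancy is attribution: the paper cites Maazen/van der Kallen for the slope-$\tfrac12$ range and Sprehn--Wahl for the improved $q\neq 2$ range, not Quillen.
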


\subsection{Applications}

Corollary~\ref{cor:free-D-module} shows that the cohomology of $\bG_n$ with coefficients in the trivial module admits a nice piece of additional structure. It is natural to look for a generalization of this result to non-trivial coefficients. Of course, one can only hope for such a result if the coefficient systems vary in a reasonable way. Let $\VI$ be the category of finite dimensional $\bF_q$-vector spaces and linear injections. A finitely generated $\VI$-module $M$ gives rise to representations $M_n=M(\bF_q^n)$ of $\bG_n$ for all $n$, and provides a good source of ``reasonable'' coefficient systems. We prove the following result concerning the cohomology of these modules (see Theorem~\ref{thm:VImod} for a quantitative statement):

\begin{theorem}
Let $M$ be a finitely generated $\VI$-module and let $t \ge 0$. Then $\bigoplus_{n \ge 0} \rH^t(\bG_n, M_n)$ is isomorphic to a finitely presented $\bD$-module, outside of finitely many degrees. In particular, $n \mapsto \rH^t(\bG_n, M_n)$ is eventually periodic in $n$, and the period divides a number of the form $w \ell^i$, where $w$ is the order of $q$ in $\bk^*$ and $\ell$ is the characteristic of $\bk$.
\end{theorem}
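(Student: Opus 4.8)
The plan is to transport the strategy of \cite{periodicity} from $\FI$ and the symmetric groups to $\VI$ and the groups $\bG_n$, using Theorem~\ref{thm:q-construction} as the replacement for Dold's theorem. Write $\mathrm{C}^\bullet(M) = \bigoplus_{n\ge 0} \rH^\bullet(\bG_n, M_n)$. The first thing to do is to upgrade $\mathrm{C}^\bullet$ to a functor from finitely generated $\VI$-modules to graded $\bD$-modules: the transfer product on $\bE$, combined with the $\VI$-structure maps of $M$ (concretely, precompose corestriction along $\bP_{n,m}\subset\bG_{n+m}$ with the map $\mathrm{Ind}_{\bP_{n,m}}^{\bG_{n+m}}(M_m)\to M_{n+m}$ adjoint to a $\VI$-structure map), makes each $\mathrm{C}^t(M)$ a graded $\bD$-module, functorially in $M$; and $\mathrm{C}^\bullet$ is a cohomological $\delta$-functor in $M$ because each $\rH^t(\bG_n,-)$ is. This is exactly as in \cite{periodicity}, and $d$ from Theorem~\ref{thm:q-construction} equips each $\mathrm{C}^t(M)$ with a connection over $(\bD,d)$, with $\mathrm{C}^\bullet(\bk)=\bE$.

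Next I would compute $\mathrm{C}^\bullet$ on induced modules. For a finite-dimensional $\bk[\bG_a]$-module $V$ let $I_a(V)$ be the induced $\VI$-module, $I_a(V)_n=\bk[\Hom_{\VI}(\bF_q^a,\bF_q^n)]\otimes_{\bk[\bG_a]}V$. Since $\bG_n$ acts transitively on $\Hom_{\VI}(\bF_q^a,\bF_q^n)$ with point stabilizer a subgroup whose unipotent radical has order a power of $q$ (hence, as $q$ is invertible in $\bk$, is cohomologically trivial) and whose Levi quotient is $\bG_a\times\bG_{n-a}$, Shapiro's lemma and the Künneth formula give
\begin{displaymath}
\rH^t(\bG_n,I_a(V)_n)\;\cong\;\bigoplus_{p+r=t}\rH^p(\bG_a,V)\otimes_{\bk}\rH^r(\bG_{n-a},\bk),
\end{displaymath}
whence $\mathrm{C}^t(I_a(V))\cong\bigoplus_{p+r=t}\rH^p(\bG_a,V)\otimes_\bk\bE^r[-a]$, which is a finitely generated free $\bD$-module by Corollary~\ref{cor:free-D-module} (after checking, by naturality, that the $\bD$-action produced this way is the canonical one of the previous paragraph). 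Dévissage along the finite filtration of a finitely generated semi-induced $\VI$-module, together with the long exact sequence of the $\delta$-functor $\mathrm{C}^\bullet$ and coherence of $\bD$ (so that bounded complexes of finitely generated free $\bD$-modules have finitely presented homology; see \S\ref{s:qdp}, and \cite{periodicity} for the $q=1$ case), then shows that $\mathrm{C}^t(J)$ is a finitely presented $\bD$-module for every finitely generated semi-induced $\VI$-module $J$.

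Now for arbitrary finitely generated $M$. Here I would invoke the structure theory of $\VI$-modules — Noetherianity of $\VI$ and the shift theorem (high shifts of a finitely generated $\VI$-module are semi-induced; equivalently, the relevant local cohomology is finitely generated and concentrated in finitely many degrees) — to produce a finite complex $0\to M\to J^0\to\cdots\to J^\delta\to 0$ of finitely generated semi-induced $\VI$-modules whose cohomology is supported in finitely many degrees. In the derived category of $\VI$-modules this gives, in all large enough degrees, a quasi-isomorphism $M\simeq[J^0\to\cdots\to J^\delta]$, and applying the derived functor $\mathrm{C}^\bullet$ yields a spectral sequence $E_1^{p,r}=\mathrm{C}^r(J^p)\Rightarrow\mathrm{C}^{p+r}(M)$ that is valid outside finitely many degrees. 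By the previous paragraph the $E_1$ page consists of finitely presented $\bD$-modules; since $\bD$ is coherent the whole spectral sequence stays within finitely presented $\bD$-modules, so the abutment $\mathrm{C}^t(M)$ is finitely presented outside finitely many degrees — the first claim. Finally, a finitely presented graded $\bD$-module has eventually periodic Hilbert function with period dividing $w\ell^i$ for some $i$ (a structural property of the $q$-divided power algebra in one variable: after restriction of scalars it is governed by a classical divided power algebra on $x^{[w]}$, over which finitely presented modules have Hilbert function periodic of period a power of $\ell$; see \S\ref{s:qdp}, and \cite{periodicity} for $q=1$). Combining with the agreement outside finitely many degrees gives that $n\mapsto\dim_\bk\rH^t(\bG_n,M_n)$ is eventually periodic with period dividing $w\ell^i$.

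The main obstacle is the third paragraph: one needs the $\VI$-analog of the shift theorem in a usable form, including a bound on the length $\delta$ and control of the degrees where $M\to J^\bullet$ fails to be exact, and one must then run the hypercohomology spectral sequence carefully so that the ``bad'' finitely many degrees never contaminate the conclusion of finite presentation elsewhere. A smaller but genuine point is the compatibility check that the $\bD$-module structure on $\mathrm{C}^\bullet(I_a(V))$ coming from Shapiro's lemma and Künneth agrees with the canonical one; this is of the same routine-but-fiddly nature as the corresponding verifications in \cite{periodicity}. Everything else is either a direct corollary of Corollary~\ref{cor:free-D-module} or is imported wholesale from \S\ref{s:qdp} and \cite{periodicity}.
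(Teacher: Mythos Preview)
Your proposal is correct and follows essentially the same route as the paper's proof of Theorem~\ref{thm:VImod}: compute $\rR^\bullet\Gamma$ on induced modules via Shapiro/K\"unneth (equivalently, the strong monoidality of $\rR^\bullet\Gamma$) to see they give free $\bD$-modules by Corollary~\ref{cor:free-D-module}, invoke the structure theory from \cite{VI-shift} (the exact triangle $T\to M\to F\to$ with $T$ supported in finitely many degrees and $F$ a bounded complex of induced modules), and run the resulting hypercohomology spectral sequence using coherence of $\bD$. The only cosmetic difference is that the paper works directly with a two-sided complex $F^\bullet$ of induced modules rather than passing through semi-induced modules, and it extracts explicit bounds on $\lambda$ and $\epsilon$ via Proposition~\ref{d:spectral}; your qualitative argument is the same mechanism without the bookkeeping.
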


 Denote, by $\bM_{\mu}$, the unipotent Specht module of $\bG_d$ corresponding to a partition $\mu$ of $d$. We use the theorem above to obtain the following result:

\begin{theorem}
	\label{thm:unipotent} 
	Assume that $\ell$ and $q$ are coprime.  Let $\mu$ be a partition of $d$, and let $s \ge 0$ be the smallest integer such that $w \ell^{s} \ge 2t +7d$. Then $n \mapsto \dim_{\bk} \rH^t(\bG_n, \bM_{\mu[n]})$ is periodic in the range $n \ge \max(d + 2t, 4d + 3)$ with period dividing $w \ell^{s + 2d+1}$. Additionally, if $q \neq 2$, then $s$ may be taken to be the smallest integer such that $w \ell^{s} \ge t +4d$.
\end{theorem}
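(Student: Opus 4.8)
The plan is to reduce the statement about unipotent Specht modules to the general $\VI$-module result (the theorem immediately preceding it). The key input is that $\bM_\mu$ fits into a $\VI$-module $\bM$ whose value at $\bF_q^n$ is $\bM_{\mu[n]}$, where $\mu[n]$ is the partition of $n$ obtained by prepending a row of size $n - d$ to $\mu$ (so $\mu[n] = (n-d, \mu_1, \mu_2, \dots)$, valid once $n - d \ge \mu_1$). I would first recall (or cite) that this assignment is a finitely generated $\VI$-module: the relevant bound is that $\bM$ is generated in degrees $\le d$ and related in degrees $\le 2d$ (or some explicit linear-in-$d$ bound), which can be extracted from the theory of $\VI$-modules and the ``$q$-analog'' of the fact that Specht modules $S^{\mu[n]}$ assemble into a finitely generated $\FI$-module. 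Once this is in place, the previous theorem applies verbatim to $M = \bM$ and $t$, giving that $\bigoplus_n \rH^t(\bG_n, \bM_{\mu[n]})$ agrees with a finitely presented graded $\bD$-module outside finitely many degrees.

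Next I would turn the qualitative ``eventually periodic'' conclusion into the quantitative one stated here, by tracking constants through Theorem~\ref{thm:VImod} (the quantitative form of the preceding theorem, which I am allowed to invoke). Concretely: the range of degrees in which the $\bD$-module is honestly finitely presented, the maximal degree of a generator, and the maximal degree of a relation should all be controlled by explicit linear functions of $t$ and $d$ — this is where the bounds $n \ge \max(d + 2t, 4d+3)$ and the appearance of $7d$ (resp.\ $4d$ when $q \ne 2$) will come from. The $q$-divided power algebra $\bD$ is isomorphic to $\bk[x^{[1]}, x^{[2]}, \dots]$ with the divided-power relations, and a finitely presented graded $\bD$-module has Hilbert function eventually periodic with period dividing $w\ell^i$ for a suitable $i$ (this is the structural fact already used in the preceding theorem, coming from the fact that $x^{[w\ell^i]}$ becomes ``polynomial-like'' — cf.\ the period statement $w\ell^i$ there). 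Feeding in generator degree $\le t + \text{(something)}\cdot d$ and relation degree $\le 2\cdot(\text{that})$, one solves for the smallest $s$ with $w\ell^s$ exceeding the relation degree, and the period then divides $w\ell^s$ times the contribution $\ell^{2d+1}$ coming from the torsion part of the module (generators minus relations can force extra $\ell$-power periodicity, bounded by the number of generators, which is $O(d)$ — hence the $2d+1$).

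The main obstacle will be the bookkeeping in the second step: I must extract from \cite{periodicity}-style arguments (as adapted in this paper) \emph{explicit} bounds on the generation and presentation degrees of the $\bD$-module $\bigoplus_n \rH^t(\bG_n, M_n)$ in terms of the generation degree $g$ and relation degree $r$ of the $\VI$-module $M$, and then specialize $g \le d$, $r \le 2d$ (with the constants $7$ and $4$ presumably absorbing both the homological-stability range for $\bG_n$ — which is where $4d+3$ and the ``$\le 2i$'' vs ``$\le i$'' dichotomy enters — and a spectral-sequence degree shift). I would organize this as: (i) a lemma giving the presentation bounds for $\VI$-module cohomology; (ii) the structure theorem for finitely presented graded $\bD$-modules with the period $w\ell^i$ and the torsion correction $\ell^{2d+1}$; (iii) the identification of $\bM$ as a finitely generated $\VI$-module with the stated degree bounds; (iv) assembling (i)–(iii) and solving for $s$. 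Steps (ii) and (iii) are essentially citations; step (i) is where the real content — and the risk of off-by-a-constant errors — lies, and I expect verifying the precise coefficients ($7d$, $4d$, $2d+1$) against the spectral sequence $\rH^s(\bG_n, \rH^{t-s}) \Rightarrow$ computing $\rH^t$ to be the delicate part.
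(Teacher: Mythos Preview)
Your overall strategy matches the paper's exactly: build a finitely generated $\VI$-module whose value at $\bF_q^n$ is $\bM_{\mu[n]}$, then feed it into the quantitative Theorem~\ref{thm:VImod}. Once that is done, the numerical bounds in the statement drop out directly from parts (b) and (c) of Theorem~\ref{thm:VImod}; no separate spectral-sequence bookkeeping is needed at this stage, since that work is already packaged inside Theorem~\ref{thm:VImod}.

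However, two points in your proposal are off and would prevent you from recovering the stated constants. First, step~(iii) is not ``essentially a citation'': the paper constructs the $\VI$-module $\bL_\mu$ explicitly as the kernel of a map $\cI(\bP_\mu) \to \bigoplus_{(r,i)} \cI(\bP_{\mu^{r,i}})$ between induced permutation modules, using James's kernel-intersection description of unipotent Specht modules. The resulting bounds are $t_0 \le 2d+1$ and $t_1 \le 2d+2$ (obtained via the regularity bound of \cite{VI-homology} applied to the cokernel), not $t_0 \le d$ and $t_1 \le 2d$ as you suggest. This is precisely what produces $t_0 + t_1 = 4d+3$ in the range. Second, your explanation of the exponent $2d+1$ is incorrect: it does not arise from a ``torsion correction'' or a count of generators. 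It comes from the term $2\delta(M)+1$ in Theorem~\ref{thm:VImod}(c), which itself traces back to the length of the complex $F^\bullet$ of induced modules resolving $M$ (supported on $[-\delta(M),\delta(M)]$) and the corresponding $2r+1$ in Proposition~\ref{d:spectral}. Here $\delta(\bL_\mu)=d$, verified via the hook-length formula for $\dim \bM_{\mu[n]}$. The constants $7d$ and $4d$ likewise come straight from Theorem~\ref{thm:VImod}(c) with $\delta(M)=d$, not from a fresh stability-range analysis.
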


\subsection{Overview of the proof}

We give two proofs of Theorem~\ref{thm:q-construction}. The second one is a straightforward, if somewhat involved, computation in cohomology. This proof is close to Dold's proof for the symmetric group. In particular, Lemma~\ref{lem:mid-portion} is the analogue of \cite[Lemma 1]{dold}. The first proof, on the other hand, derives the result from general considerations. It is conceptually more clear, and applies in some other situations. We briefly summarize the idea here.

Let $\cA$ be a $\bk$-linear abelian category equipped with a braided tensor product and an object $\bk\{1\}$, regarded as a shift of the unit object. Let $R$ be an algebra in $\cA$. We define a derivation of degree $-1$ on $R$ to be a map $d \colon R \to  R \otimes \bk\{1\}$ satisfying the Leibniz rule. We note that to make sense of the Leibniz rule, one has to commute $\bk\{1\}$ and $R$ at some point, which is why we require a braiding. Fix such a $d$ on $R$. Suppose now that $\cB$ is a second $\bk$-linear abelian category equipped with a braided tensor product, and that $\Gamma \colon \cA \to \cB$ is a left-exact strict braided monoidal functor. Assuming $\Gamma$ satisfies some mild technical conditions, its right-derived functor $\rR \Gamma$ is also strict braided monoidal, and so $(\rR \Gamma)(R)$ is an algebra in $\rD(\cB)$ and $d$ induces on it a derivation of degree $-1$. 

Now, let $\FB$ be the groupoid of finite sets, and let $\Mod_{\FB}$ be the category of functors $\FB \to \Mod_{\bk}$. Then $\Mod_{\FB}$ is a symmetric monoidal category under Day convolution (with respect to disjoint union). Let $\bA$ be the $\FB$-module defined by $\bA(S)=\bk$ for all $S$, which is naturally an algebra. We show that $\bA$ admits a natural degree $-1$ derivation $d$. The functor
\begin{displaymath}
\Gamma \colon \Mod_{\FB} \to \GrVec, \qquad
\Gamma(M) = \bigoplus_{n \ge 0} M([n])^{\fS_n}
\end{displaymath}
is naturally a strict symmetric monoidal functor. We thus see that $d$ induces a derivation on the algebra $\rR \Gamma(\bA)$. This recovers Dold's theorem.

Our first proof of Theorem~\ref{thm:q-construction} is entirely parallel to this. Let $\VB$ be the groupoid of finite dimensional $\bF_q$-vector spaces. The category $\Mod_{\VB}$ admits a natural braiding, due to Joyal--Street. Let $\bA$ be the $\VB$-module defined by $\bA(V)=\bk$ for all $V$. Then $\bA$ is naturally an algebra, and we show that it admits a natural degree $-1$ derivation $d$. We define $\Gamma$ on $\Mod_{\VB}$ similarly to the above, but using $\bG_n$-invariants. We show that it is strict braided monoidal (with respect to a non-standard braiding on $\GrVec$). Thus $d$ induces a derivation on $\rR \Gamma(\bA)$, which yields the main theorem.

\subsection{Outline}

In \S \ref{s:congen} we explain the general formalism of derivations in braided monoidal categories, and how this often leads to derivations on cohomology. In \S \ref{s:symmetric}, we explain in some more detail how this formalism can be used to recover Dold's theorem. In \S \ref{s:qdp}, we review the $q$-divided power algebra, and some other basic $q$-analogs. In \S \ref{s:gl}, which is the heart of the paper, we give our two proofs of Theorem~\ref{thm:q-construction}. Finally,  in \S \ref{s:vi}, we give the applications to $\VI$-modules and to the cohomology of unipotent Specht modules. 

\subsection{Notation}

We list some of the important notation used throughout the paper:
\begin{itemize}
\item $\bk$: the coefficient field, typically of finite characteristic $\ell$.
\item $q$: a prime power that is invertible in $\bk$.
\item $w$: the order of $q$ in the multiplicative group $\bk^*$.
\item $\bG_n$: the finite general linear group $\GL_n(\bF_q)$.
\item $\fS_n$: the symmetric group on $n$ letters.
\item $\FB$: the groupoid of finite sets.
\item $\VB$: the groupoid of finite dimensional $\bF_q$-vector spaces.
\item $\Vec$: the category of $\bk$-vector spaces.
\item $\GrVec$: the category of graded $\bk$-vector spaces.
\item $\Mod_{\cC}$, where $\cC$ is a category: the category of functors $\cC \to \Vec$.
\end{itemize}

\subsection*{Acknowledgements}

We thank Andrew Putman for helpful conversations.

\section{Derivations on cohomology}
\label{s:congen}

Fix a field $\bk$. Let $\cA$ be a $\bk$-linear category equipped with a bilinear braided monoidal operation $\otimes$. We suppose given an object $\bk\{1\}$ of $\cA$, which we think of as a ``shift'' of the unit object.

Let $R$ be an algebra in $\cA$, assumed to be unital and associative. A {\bf derivation (of degree $-1$)} on $R$ is a map $d \colon R \to R \otimes \bk\{1\}$ satisfying the Leibniz rule, in the following sense. Let $m \colon R \otimes R \to R$ be the multiplication map, and let $\beta$ be the braiding. Consider the following three compositions: \begin{align*}
a &\colon R \otimes R \stackrel{d \otimes 1}{\longrightarrow} R \otimes \bk\{1\} \otimes R \stackrel{1 \otimes \beta}{\longrightarrow} R \otimes R \otimes \bk\{1\}   \stackrel{m \otimes 1}{\longrightarrow} R \otimes \bk\{1\} \\
b &\colon R \otimes R \stackrel{1 \otimes d}{\longrightarrow} R \otimes R \otimes \bk\{1\}   \stackrel{m \otimes 1}{\longrightarrow} R \otimes \bk\{1\} \\
c &\colon R \otimes R \stackrel{m}{\longrightarrow} R \stackrel{d}{\longrightarrow} R \otimes \bk\{1\}.
\end{align*} 
The Leibniz rule then takes the form $c=a+b$.

Let $R$ be an algebra in $\cA$ equipped with a derivation $d$, and let $M$ be a (right) $R$-module. A {\bf connection} on $M$ (of degree $-1$) is a map $\nabla \colon M \to M\otimes \bk\{1\}$ satisfying the obvious analog of the Leibniz rule.

Suppose now that $\cB$ is a second $\bk$-linear category equipped with a bilinear braided monoidal operation and an object $\bk\{1\}$, and $\Gamma \colon \cA \to \cB$ is a $\bk$-linear strict braided monoidal functor taking $\bk\{1\}$ to $\bk\{1\}$. Then $\Gamma$ preserves algebras, derivations, and connections. Precisely, if $R$ is an algebra in $\cA$ with a derivation $d$ and $M$ is an $R$-module with a connection $\nabla$ then $\Gamma(R)$ is an algebra in $\cB$ with a derivation $\Gamma(d)$, and $\Gamma(M)$ is a $\Gamma(R)$-module with a connection $\Gamma(\nabla)$.

Now suppose that we are in the following situation: $\cA$ and $\cB$ are abelian with enough injectives, the tensor products are exact, $\Gamma$ is left-exact, $\bk\{1\}$ is $\Gamma$-acyclic, and any tensor product of injectives in $\cA$ is $\Gamma$-acyclic. Then the bounded below derived categories $\rD^+(\cA)$ and $\rD^+(\cB)$ inherit bilinear braided monoidal structures, and the derived functor $\rR \Gamma$ of $\Gamma$ is a $\bk$-linear strict braided monoidal functor taking $\bk\{1\}$ to $\bk\{1\}$ (thought of as objects in the derived category). Thus $\rR \Gamma$ preserves algebras, derivations, and connections. In particular, if $R$ is an algebra in $\cA$ with a derivation $d$, and $M$ is an $R$-module with a connection $\nabla$, then $\rR \Gamma(R)$ is an algebra in $\rD^+(\cB)$ with a derivation, and $\rR \Gamma(M)$ is an $\rR \Gamma(R)$-module with a connection.

Finally, suppose that $S$ is an algebra in $\rD^+(\cB)$ supported in non-negative cohomological degrees equipped with a derivation. Then one easily sees that $\bigoplus_{i \ge 0} \rH^i(S)$ is an algebra in $\cB$ with a derivation. Furthermore, $\rH^0(S)$ is a subalgebra of $\bigoplus_{i \ge 0} \rH^i(S)$ closed under the derivation, and each $\rH^i(S)$ is a module over it with connection. Thus, in the situation of the previous paragraph, each $\rR^i \Gamma(R)$ is a module over $\Gamma(R)$ with a connection.

\begin{remark}
It is not necessary to assume that the tensor products are exact: we only did this to simplify exposition. Also, rather than assuming the monoidal structures are braided, it suffices to assume that $\bk\{1\}$ belongs to the Drinfeld center.
\end{remark}

\section{The symmetric group revisited}
\label{s:symmetric}

In this section, we explain how the methods of \S \ref{s:congen} can be used to recover Dold's theorem. Our proof of Theorem~\ref{thm:q-construction} will closely parallel this discussion.

Let $\FB$ be the groupoid of finite sets and let $\Mod_{\FB}$ be the category of $\FB$-modules, i.e., the functor category $\Fun(\FB, \Vec)$. The category $\Mod_{\FB}$ carries a natural symmetric tensor product, called {\bf Day convolution},  defined by
\begin{displaymath}
(M \otimes N)(S) = \bigoplus_{S = A \amalg B} M(A) \otimes N(B).
\end{displaymath}
Let $\bk\{1\}$ denote the $\FB$-module that is $\bk \cdot e_i$ on one-element sets $\{i\}$ and zero on other sets; the symbol $e_i$ is simply a name for a basis vector.

Let $\bA$ be the $\FB$-module which takes each set to the trivial representation. For notation, we write $\bA(S)=\bk \cdot t^S$; the power of $t$ is just a name for a basis vector. This $\bA$ is naturally an algebra in $\Mod_{\FB}$ with the product map $(\bA \otimes \bA)(S) \to \bA(S)$ given by $t^A \otimes t^B \mapsto t^{S}$ for all decompositions $S = A \amalg B$. We note that the category of right $\bA$-modules is equivalent to the category of $\FI$-modules, where $\FI$ is the category of finite sets and injective functions \cite[Proposition~7.2.5]{catgb}. We have a map of $\FB$-modules
\begin{displaymath}
d \colon \bA \to \bA \otimes \bk\{1\}, \qquad
d(t^S) = \sum_{i \in S} e_i \otimes t^{S \setminus \{i\}}.
\end{displaymath}
One readily verifies that this is a derivation of $\bA$ of degree $-1$.

Let $\GrVec$ denote the category of non-negatively graded vector spaces. Let $\bk\{1\}$ in $\GrVec$ be the object that is $\bk$ in degree~1, and~0 in other degrees. Define a functor
\begin{displaymath}
\Gamma \colon \Mod_{\bA} \to \GrVec, \qquad
\Gamma(M) = \bigoplus_{n \ge 0} M([n])^{\fS_n}.
\end{displaymath}
The $\Gamma$ is a strict monoidal functor \cite[Proposition~4.1]{periodicity}, and takes $\bk\{1\}$ to $\bk\{1\}$. We thus see from the general considerations of \S \ref{s:congen} that $\bD = \bigoplus_{n \ge 0} \rH^0(\fS_n, \bk)$ is an algebra in $\GrVec$ with derivation, and that $\bE^i = \bigoplus_{n \ge 0} \rH^i(\fS_n, \bk)$ is a $\bD$-module with connection. It is not difficult to see that $\bD$ is naturally identified with the divided power algebra, and that the derivation takes $x^{[n]}$ to $x^{[n-1]}$. The connection on $\bE^i$ is given by the composition
\begin{displaymath}
\rH^i(\fS_n, \bk) = \rH^i(\fS_n, \bA_n) \to \rH^i(\fS_n, (\bA \otimes \bk\{1\})_n) \cong \rH^i(\fS_{n-1}, \bk),
\end{displaymath}
which is simply restriction. Note that $(\bA \otimes \bk\{1\})_n \cong \bk^n \cong \Ind_{\fS_{n-1}}^{\fS_n}(\bk)$, and the isomorphism above is simply Shapiro's lemma. We have thus recovered Dold's theorem.

\section{The $q$-divided power algebra}
\label{s:qdp}

\subsection{$q$-binomial coefficients}

Fix a field $\bk$ and an element $q \in \bk^*$. Define
\begin{displaymath}
[n]_q = 1 + q + \cdots + q^{n-1}, \qquad
[n]_q! = [n]_q [n-1]_q \cdots [1]_q
\end{displaymath}
and
\begin{displaymath}
{n \brack m}_q = \frac{[n]_q!}{[m]_q! [n-m]_q!}.
\end{displaymath}
In the third definition, one should regard the numerator and denominator as polynomials in $q$, perform the division as polynomials---which yields a polynomial---and then evaluate at $q$. The quantity ${n \brack m}_q$ is called the {\bf $q$-binomial coefficient} (or {\bf Gaussian binomial coefficient}). When $q$ is a prime power, it counts the number of $m$-dimensional linear subspaces of $\bF_q^n$, and when $q=1$ it reverts to the usual binomial coefficient. These coefficients satisfy the following analogs of Pascal's identity:
\begin{equation} \label{eq:q-pascal}
{n \brack m}_q = q^m {n-1 \brack m}_q + {n-1 \brack m-1}_q = {n-1 \brack m}_q + q^{n-m} {n-1 \brack m-1}_q.
\end{equation}

\subsection{$q$-derivations}

Let $R$ be a non-negatively graded $\bk$-algebra. A (right) {\bf $q$-derivation} of degree $-n$ on  $R$ is a $\bk$-linear map $d \colon R \to R[n]$ that satisfies \[d(a b) = ad(b) + q^{n\deg(b)} d(a) b \] for homogeneous elements $a,b \in R$. Fix such a $d$. A {\bf $q$-connection} (relative to $d$) on a graded right $R$-module $M$ is a $\bk$-linear map $\nabla \colon M \to M[n]$ that satisfies $\nabla(ma) = md(a) + q^{n \deg(a)}  \nabla(m) a$ for every homogeneous $a \in R$ and $m \in M$.  

\begin{proposition} \label{prop:iterated-q}
Let $M$ be a graded $R$-module. Let $d$ be a degree $-1$ $q$-derivation on $R$, and let $\nabla$ be a $q$-connection on $M$. Then we have
\begin{displaymath}
\nabla^n (ma) = \sum_{i=0}^{n} q^{(n-i)(\deg(a) -i)} {n \brack i}_q  \nabla^{n-i}(m) d^i(a).
\end{displaymath}
for all homogeneous $a \in R$ and $m \in M$.
\end{proposition}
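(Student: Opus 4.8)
The plan is to prove the identity by induction on $n$. The base case $n=0$ is the trivial statement $\nabla^0(ma) = ma$, and the case $n=1$ is simply the defining Leibniz rule for a $q$-connection: $\nabla(ma) = md(a) + q^{\deg(a)}\nabla(m)a$, which matches the claimed formula since ${1 \brack 0}_q = {1 \brack 1}_q = 1$ and the exponents $q^{(1-i)(\deg(a)-i)}$ evaluate to $q^{\deg(a)}$ when $i=0$ and $q^0$ when $i=1$.

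For the inductive step, I would assume the formula holds for $n-1$ and apply $\nabla$ once more to $\nabla^{n-1}(ma)$. Writing $\nabla^{n-1}(ma) = \sum_{i=0}^{n-1} q^{(n-1-i)(\deg(a)-i)} {n-1 \brack i}_q \nabla^{n-1-i}(m) d^i(a)$, I apply $\nabla$ term by term. The key subtlety is that $\nabla$ acting on the product $\nabla^{n-1-i}(m) \cdot d^i(a)$ must use the $q$-Leibniz rule with the correct exponent: since $d^i(a)$ has degree $\deg(a) - i$ (because $d$ has degree $-1$), we get
\begin{displaymath}
\nabla\!\left(\nabla^{n-1-i}(m) d^i(a)\right) = \nabla^{n-1-i}(m) d^{i+1}(a) + q^{\deg(a)-i} \nabla^{n-i}(m) d^i(a).
\end{displaymath}
Substituting this in, collecting the coefficient of $\nabla^{n-i}(m) d^i(a)$ for each fixed $i$ (which receives contributions from the index-$i$ term via the second part of the Leibniz rule, and from the index-$(i-1)$ term via the first part), and then verifying that the resulting $q$-power times binomial coefficient equals $q^{(n-i)(\deg(a)-i)} {n \brack i}_q$ is the crux of the argument.

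The main obstacle will be the bookkeeping of $q$-exponents in that final collection step. Concretely, the coefficient of $\nabla^{n-i}(m)d^i(a)$ ends up being
\begin{displaymath}
q^{(n-1-i)(\deg(a)-i)} {n-1 \brack i}_q \cdot q^{\deg(a)-i} \;+\; q^{(n-i)(\deg(a)-i-1)} {n-1 \brack i-1}_q,
\end{displaymath}
and one must massage the first exponent as $(n-1-i)(\deg(a)-i) + (\deg(a)-i) = (n-i)(\deg(a)-i)$ and the second as $(n-i)(\deg(a)-i-1) = (n-i)(\deg(a)-i) - (n-i)$, so that after factoring out $q^{(n-i)(\deg(a)-i)}$ one is left with exactly $q^{0}{n-1 \brack i}_q + q^{-(n-i)}{n-1 \brack i-1}_q$. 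This is not quite a clean form of $q$-Pascal, so I would instead arrange the Leibniz rule application to produce the second identity in \eqref{eq:q-pascal}, namely ${n \brack i}_q = {n-1 \brack i}_q + q^{n-i}{n-1 \brack i-1}_q$; getting the exponents to land on this requires care about which factor one commutes past which, but it is a routine (if fiddly) verification once the correct normalization is chosen. I would also need to double-check the edge cases $i=0$ and $i=n$ in the sum, where one of the two contributing terms is absent, and confirm they are consistent with the convention ${n-1 \brack -1}_q = {n-1 \brack n}_q = 0$.
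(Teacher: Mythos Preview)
Your approach is exactly the paper's: induction on $n$ combined with the $q$-Pascal identity \eqref{eq:q-pascal}. The only issue is a small arithmetic slip in your bookkeeping. The contribution to $\nabla^{n-i}(m)d^i(a)$ coming from the index $i-1$ term of the inductive hypothesis carries the exponent $(n-1-(i-1))(\deg(a)-(i-1)) = (n-i)(\deg(a)-i+1)$, not $(n-i)(\deg(a)-i-1)$ as you wrote. With the correct sign, factoring out $q^{(n-i)(\deg(a)-i)}$ leaves ${n-1 \brack i}_q + q^{\,n-i}{n-1 \brack i-1}_q$, which \emph{is} precisely the second identity in \eqref{eq:q-pascal}; no rearrangement is needed, and your worry that ``this is not quite a clean form of $q$-Pascal'' disappears.
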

      
\begin{proof}
The formula can easily be verified by induction on $n$ and \eqref{eq:q-pascal}.
\end{proof}

\begin{corollary} \label{cor:itder}
Suppose that ${n \brack i}_q=0$ for $0<i<n$. Then $d^n$ is a $q$-derivation on $R$ and $\nabla^n$ is a $q$-connection on $M$.
\end{corollary}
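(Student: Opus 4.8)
The plan is to read both assertions off Proposition~\ref{prop:iterated-q} by feeding it the right data and then using the vanishing hypothesis to collapse the resulting sum to its two extreme terms.

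For the claim that $d^n$ is a $q$-derivation, I would note that $R$ is a right module over itself and that $d\colon R \to R[1]$ serves simultaneously as a $q$-derivation on $R$ and as a $q$-connection on the $R$-module $R$ (the two Leibniz identities coincide when $n=1$ and $M=R$). Applying Proposition~\ref{prop:iterated-q} with $m = a \in R$, with the module element written $b \in R$ in place of ``$a$'', and with $\nabla = d$, one obtains
\[
d^n(ab) = \sum_{i=0}^{n} q^{(n-i)(\deg(b) - i)} {n \brack i}_q\, d^{n-i}(a)\, d^i(b).
\]
By hypothesis the terms with $0 < i < n$ all vanish, so only $i = 0$ and $i = n$ remain; these contribute $q^{n\deg(b)}\, d^n(a)\, b$ and $a\, d^n(b)$ respectively (using ${n \brack 0}_q = {n \brack n}_q = 1$ and that the exponent of $q$ in the $i=n$ term is $0$). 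This is exactly the identity $d^n(ab) = a\, d^n(b) + q^{n\deg(b)}\, d^n(a)\, b$ defining a $q$-derivation of degree $-n$ on $R$, and $d^n$ visibly maps $R$ into $R[n]$.

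For the claim about $\nabla^n$, I would simply apply Proposition~\ref{prop:iterated-q} to the given $M$, $\nabla$, and $d$, and collapse the sum in the same way: only $i=0$ and $i=n$ survive, giving $\nabla^n(ma) = m\, d^n(a) + q^{n\deg(a)}\, \nabla^n(m)\, a$ for homogeneous $a \in R$ and $m \in M$, which says precisely that $\nabla^n$ is a $q$-connection relative to the degree $-n$ $q$-derivation $d^n$ just produced.

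I do not expect any genuine obstacle here: granted Proposition~\ref{prop:iterated-q}, the corollary is immediate. The only point needing a moment's care is the bookkeeping — checking that the exponents of $q$ in the two surviving terms simplify to $n\deg(b)$ and $0$, and noting that ``$q$-derivation on $R$'' in the statement is to be read with degree $-n$.
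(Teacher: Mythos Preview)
Your proposal is correct and matches the paper's approach exactly: the corollary is stated without proof in the paper, as it is immediate from Proposition~\ref{prop:iterated-q} once the vanishing hypothesis collapses the sum to its two extremal terms. Your bookkeeping on the $q$-exponents and the identification of the degree $-n$ Leibniz rule is accurate.
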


\subsection{The $q$-divided power algebra}

Let $\bD=\bigoplus_{n \ge 0} \bk x^{[n]}$, where $x^{[n]}$ is an element of degree $n$. We define a multiplication on $\bD$ by the formula
\begin{displaymath}
x^{[n]} \cdot x^{[m]} = {n+m \brack n}_q x^{[n+m]}.
\end{displaymath}
It is well-known that this defines on $\bD$ the structure of a commutative, associative, unital algebra. This algebra is called the {\bf $q$-divided power algebra} (in one variable). When $q=1$, it reverts to the usual divided power algebra.

\begin{proposition}
The map $d \colon \bD \to \bD[1]$ given by $d(x^{[n]}) = x^{[n-1]}$ is a $q$-derivation.
\end{proposition}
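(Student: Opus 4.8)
The plan is to verify the $q$-Leibniz rule directly on the basis $\{x^{[n]}\}_{n \ge 0}$. Since $d$ is $\bk$-linear and the multiplication is $\bk$-bilinear, it suffices to check
\[
d(x^{[n]} \cdot x^{[m]}) = x^{[n]} \, d(x^{[m]}) + q^{m} \, d(x^{[n]}) \, x^{[m]}
\]
for all integers $n, m \ge 0$, where we adopt the convention $x^{[k]} = 0$ for $k < 0$, so that in particular $d(x^{[0]}) = 0$.

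Using the definition of the product, the left-hand side equals $d\!\left({n+m \brack n}_q x^{[n+m]}\right) = {n+m \brack n}_q x^{[n+m-1]}$. For the right-hand side, $x^{[n]} \, d(x^{[m]}) = x^{[n]} \cdot x^{[m-1]} = {n+m-1 \brack n}_q x^{[n+m-1]}$, while $q^{m} \, d(x^{[n]}) \, x^{[m]} = q^{m} \, x^{[n-1]} \cdot x^{[m]} = q^{m} {n+m-1 \brack n-1}_q x^{[n+m-1]}$. Comparing coefficients of $x^{[n+m-1]}$, the identity to prove is exactly
\[
{n+m \brack n}_q = {n+m-1 \brack n}_q + q^{m} {n+m-1 \brack n-1}_q,
\]
which is the second form of the $q$-Pascal identity \eqref{eq:q-pascal} with $N = n+m$ and $M = n$ (so that $N - M = m$).

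It remains only to observe that the boundary cases $n = 0$ and $m = 0$ are subsumed by the same computation, once the $q$-binomial symbols are read with the standard conventions ${k \brack -1}_q = 0$ and ${k \brack 0}_q = {k \brack k}_q = 1$: in these cases both sides collapse to $d(x^{[m]})$ or $d(x^{[n]})$, respectively. I expect no real obstacle here — the proposition is a direct transcription of one of the two $q$-Pascal recurrences. The only point worth flagging is that the weight $q^{m} = q^{\deg(x^{[m]})}$ appearing in the Leibniz rule (rather than $q^{n}$) is precisely what dictates using the second form of \eqref{eq:q-pascal} rather than the first; had the convention for the $q$-derivation put the weight on the first factor, one would instead invoke ${n+m \brack n}_q = q^{n}{n+m-1 \brack n}_q + {n+m-1 \brack n-1}_q$.
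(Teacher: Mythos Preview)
Your proof is correct and essentially identical to the paper's own argument: both verify the $q$-Leibniz rule on the basis elements $x^{[n]} \cdot x^{[m]}$ and reduce the identity to the second form of the $q$-Pascal recurrence \eqref{eq:q-pascal}. The extra remarks you make about boundary cases and the choice of Pascal identity are fine but not needed.
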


\begin{proof}
  We have  
  \begin{align*}
  x^{[i]} d(x^{[j]}) + q^j d(x^{[i]}) x^{[j]}  &= {i+j-1 \brack i}_q x^{[i+j-1]} + q^j {i+j-1 \brack i - 1}_q x^{[i+j-1]}\\
  &= {i+j \brack i}_q x^{[i+j-1]}\\
  &=  d(x^{[i]} x^{[j]}). \qedhere
  \end{align*}
\end{proof}

\subsection{Connections on $\bD$-modules}

For a graded $\bD$-module $M$, we set $\ol{M} = M/\bD_+M$, and we denote the image of an element $m \in M$ in $\ol{M}$ by $m_0$. We note that $\bD$ has {\bf Taylor expansions}, in the sense that the map $\bD \to \ol{\bD} \otimes_{\bk} \bD $ given by $f \mapsto \sum_{n \ge 0}  d^n(f)_0 \otimes x^{[n]}$ is an isomorphism.

\begin{proposition}  \label{prop:connection}
  Let $M$ be a graded right $\bD$-module equipped with a $q$-connection $\nabla$. Define a map
  \begin{displaymath}
    \Phi \colon M \to \overline{M} \otimes_{\bk} \bD, \qquad
    \Phi(m) = \sum_{n \ge 0} (\nabla^n m)_0 \otimes x^{[n]}.
  \end{displaymath}
  Then:
  \begin{enumerate}[\rm \indent (a)]
  \item $\Phi$ is an isomorphism of $\bD$-modules.
  \item Under $\Phi$, the connection $\nabla$ corresponds to $1 \otimes d$.
  \item The map $\Phi$ identifies $\ker(\nabla)$ with $\overline{M}$.
  \item The natural map $\ker(\nabla) \otimes_{\bk} \bD     \to M$ is an isomorphism of $\bD$-modules.
  \item $\nabla$ is surjective.
  \end{enumerate}
\end{proposition}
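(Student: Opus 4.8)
The plan is to dispatch (b) immediately, then reduce (c)--(e) to part~(a), so that the real work is concentrated in (a). Observe first that $\Phi$ is degree-preserving: for homogeneous $m$ the term $(\nabla^n m)_0\otimes x^{[n]}$ has degree $(\deg m-n)+n=\deg m$. Part~(b) is then an index shift: $\Phi(\nabla m)=\sum_n(\nabla^{n+1}m)_0\otimes x^{[n]}$ and $(1\otimes d)\Phi(m)=\sum_n(\nabla^n m)_0\otimes x^{[n-1]}$ visibly agree. I expect (a) to be the main obstacle; once it is known, (c)--(e) are bookkeeping.

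Part~(a) has two halves: that $\Phi$ is $\bD$-linear, and that it is bijective. For $\bD$-linearity it suffices, by $\bk$-linearity, to check $\Phi(m x^{[e]})=\Phi(m)x^{[e]}$ for homogeneous $m$ and all $e\ge 0$. Expanding the left side with the iterated Leibniz rule of Proposition~\ref{prop:iterated-q}, applied to the pair consisting of $\nabla$ on $M$ and $d$ on $\bD$ and using $d^i(x^{[e]})=x^{[e-i]}$, gives
\[
\nabla^n(m x^{[e]})=\sum_{i=0}^{n} q^{(n-i)(e-i)}{n \brack i}_q\,\nabla^{n-i}(m)\,x^{[e-i]}.
\]
Applying the reduction $(-)_0\colon M\to\overline M$, which annihilates anything multiplied by a positive-degree element of $\bD$, leaves only the term $i=e$, namely ${n \brack e}_q(\nabla^{n-e}m)_0$ (using $x^{[0]}=1$). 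After reindexing by $k=n-e$ this matches $\Phi(m)x^{[e]}$, computed from $x^{[k]}x^{[e]}={k+e \brack k}_q x^{[k+e]}$ and the symmetry ${k+e \brack k}_q={k+e \brack e}_q$. This is the one place the $q$-binomial identities genuinely enter.

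For bijectivity I would use the following graded Nakayama statement: if $\bD$ is a connected non-negatively graded $\bk$-algebra, $F$ a free graded $\bD$-module on a homogeneous basis in non-negative degrees, $P$ a non-negatively graded $\bD$-module, and $f\colon P\to F$ a degree-preserving $\bD$-linear map with $\overline f\colon\overline P\to\overline F$ an isomorphism, then $f$ is an isomorphism. (Sketch: graded Nakayama applied to $\operatorname{coker}(f)$ gives surjectivity; lifting a homogeneous $\bD$-basis of $F$ to elements $p_\alpha\in P$, these are $\bD$-linearly independent because $f$ is, so they span a free submodule $P'\cong F$ on which $f$ restricts to an isomorphism; comparing reductions shows $\overline{P'}\to\overline P$ is an isomorphism, so $P=P'+P\bD_+$, and graded Nakayama applied to $P/P'$ forces $P=P'$.) This applies to $f=\Phi$: the module $\overline M\otimes_\bk\bD$ is free, a homogeneous basis $\{v_\alpha\}$ of $\overline M$ giving the $\bD$-basis $\{v_\alpha\otimes x^{[0]}\}$, and $\overline\Phi$ is the identity of $\overline M$ since modulo $\overline M\otimes\bD_+$ the element $\Phi(m)$ is its $x^{[0]}$-component $m_0$. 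This completes (a); note it generalizes the Taylor-expansion property of $\bD$ recorded above, which is the case $M=\bD$.

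Parts (c)--(e) are now formal. By (a) and (b), conjugating $\nabla$ by $\Phi$ turns it into $1\otimes d$ on $\overline M\otimes_\bk\bD$. Since $\ker(d\colon\bD\to\bD)=\bk x^{[0]}$ and $\bk$ is a field, $\ker(1\otimes d)=\overline M\otimes\bk x^{[0]}$, so $\Phi$ restricts to an isomorphism $\ker(\nabla)\xrightarrow{\ \sim\ }\overline M\otimes\bk x^{[0]}$; moreover $\Phi(m)=m_0\otimes x^{[0]}$ for $m\in\ker(\nabla)$, so under $\overline M\otimes\bk x^{[0]}\cong\overline M$ this is precisely the natural map $\ker(\nabla)\hookrightarrow M\to\overline M$, giving (c). For (d), composing the natural map $\ker(\nabla)\otimes_\bk\bD\to M$ with $\Phi$ sends $m\otimes a$ to $\Phi(m)a=(m_0\otimes x^{[0]})a=m_0\otimes a$, i.e.\ it equals $\iota\otimes 1$ for the isomorphism $\iota\colon\ker(\nabla)\to\overline M$ of (c); since $\Phi$ and $\iota\otimes 1$ are isomorphisms, so is the natural map. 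For (e), $d$ is surjective on $\bD$ (it sends $x^{[k+1]}$ to $x^{[k]}$), hence so is $1\otimes d$ as $\bk$ is a field, hence so is $\nabla$.
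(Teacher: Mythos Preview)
Your proof is correct and your argument for $\bD$-linearity is essentially identical to the paper's: both expand $\nabla^n(ma)$ via Proposition~\ref{prop:iterated-q}, observe that only the term with $i=\deg(a)$ survives the reduction $(-)_0$, and match the result against the product in $\overline M\otimes_\bk\bD$. The paper defers bijectivity and parts (b)--(e) to \cite[Proposition~3.2]{periodicity}; your graded-Nakayama argument for bijectivity and the formal deductions of (b)--(e) from (a) supply exactly what is needed and are in the same spirit as that reference.
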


\begin{proof}
  Here we only show that $\Phi$ is $\bD$-linear. The rest of the proof is entirely analogous to the one in \cite[Proposition 3.2]{periodicity}. 
	
  To see the $\bD$-linearity of $\Phi$, let $m \in M$ and $a \in \bD$ be homogeneous elements. By Proposition~\ref{prop:iterated-q}, we have
  \[
    \nabla^n (ma) = \sum_{i=0}^{n} q^{(n-i)(\deg(a) -i)} {n \brack i}_q  \nabla^{n-i}(m) d^i(a).
  \]
  Note that $d^i(a)_0$ is nonzero only if $i = \deg(a)$. Thus we have
	\begin{displaymath}
	\begin{split}
	\Phi(ma)
	&= \sum_{n \ge 0} \sum_{i+j=n} q^{(n-i)(\deg(a) -i)} {n \brack i}_q   \nabla^j(m)_0 d^i(a)_0 \otimes x^{[n]} \\
	&=\left( \sum_{j \ge 0}  \nabla^j(m)_0 \otimes x^{[j]} \right) \left( \sum_{i \ge 0}  d^i(a)_0 x^{[i]} \right)  \\
	&= \Phi(m)a.
	\end{split}
	\end{displaymath}
This proves that $\Phi$ is $\bD$-linear.
\end{proof}

\subsection{Periodicity phenomena for $\bD$-modules} \label{ss:periodicity-D}

Assume now that $\bk$ has characteristic $\ell>0$ and that $q$ is a root of unity in $\bk$; let $w$ be its order. We define an integer-valued sequence $b_{\bullet}$ as follows. If $w >1$, we set $b_0 = 1$, $b_i = w\ell^{i-1}$ for $i > 0$. Otherwise, we set $b_0 = 1$, $b_i = \ell^i$. It follows from \cite[\S 2]{dp} that there is an isomorphism
\[
  \bk[y_0, y_1, \ldots]/(y_i^{b_{i+1}/b_i}) \to \bD
\]
of $\bk$-algebras  given by $y_i \mapsto x^{[b_i]}$. In particular, we see that $\bD$ is a coherent ring. For a subset $X$ of $\bZ_{\ge 0}$, we let $\bD_X$ denote the subring of $\bD$ generated by $y_i$ for $i \in X$.  

Let $\bD_{\ge r}$ denote the subring of $\bD$ generated by $y_i$ for $i \ge r$.

\begin{definition}
  Let $M$ be a finitely presented graded $\bD$-module. We define
  \begin{itemize}
  \item $g_r(M)$ to be the degree of generation of $M$ as a $\bD_{\ge r}$-module.
  \item $\epsilon(M)$ to be the minimal non-negative integer $r$ so that $M$ is free as a $\bD_{\ge r}$-module.
  \item $\lambda(M)$ to be the common
    value of the expression $g_r(M) + 1 - b_r$ for $r \ge \epsilon(M) $. \qedhere
  \end{itemize}
\end{definition}

\begin{proposition}[{\cite[\S 3.3]{periodicity}}] \label{prop:lambda-epsiolon-invariants}
  We have the following:
  \begin{enumerate}[\rm \indent (a)]
  \item Let $M$ be a finitely presented $\bD$-module. Then $\dim_{\bk} M_n = \dim_{\bk} M_m$ if $n \equiv m \pmod{b_{\epsilon(M)}}$ and $n, m \ge \lambda(M)$.
  \item Let $0 \to K \to L \to M \to 0$ be a short exact sequence of finitely presented $\bD$-modules. Then we have
    \begin{align*}
      \epsilon(L) \le \max(\epsilon(K), \epsilon(M)), & \qquad	\lambda(L) = \max(\lambda(K), \lambda(M)).		
    \end{align*}
  \item Let $M$ be a finitely presented $\bD$-module and let $N$ be a finitely presented subquotient of $M$. Then $\lambda(N) \le \lambda(M)$.
  \end{enumerate}
\end{proposition}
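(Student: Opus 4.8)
The plan is to reduce everything to the tensor factorization of $\bD$. Writing $c_i = b_{i+1}/b_i$, the isomorphism recalled above identifies $\bD$ with $\bigotimes_{i\ge 0}\bk[y_i]/(y_i^{c_i})$ (with $\deg y_i = b_i$) and $\bD_{\ge r}$ with $\bigotimes_{i\ge r}\bk[y_i]/(y_i^{c_i})$, so that $\bD = \bD_{<r}\otimes_\bk\bD_{\ge r}$ with $\bD_{<r} \coloneq \bk[y_0,\dots,y_{r-1}]/(y_0^{c_0},\dots,y_{r-1}^{c_{r-1}})$ finite-dimensional and concentrated in degrees $0,\dots,b_r-1$. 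I would first record two elementary consequences: the Hilbert series of $\bD_{\ge r}$ telescopes, $\prod_{i\ge r}(1+t^{b_i}+\cdots+t^{b_i(c_i-1)}) = \prod_{i\ge r}\frac{1-t^{b_{i+1}}}{1-t^{b_i}} = \frac{1}{1-t^{b_r}}$, so that $\dim_\bk(\bD_{\ge r})_n$ is $1$ when $b_r\mid n$ and $0$ otherwise; and $\bD_{\ge r}$ is free over $\bD_{\ge r+1}$ with basis $1,y_r,\dots,y_r^{c_r-1}$ in degrees $0,b_r,\dots,b_{r+1}-b_r$.

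Next I would show $\epsilon(M)$ is finite and $\lambda(M)$ well defined. Given a finite presentation $\bD^a\to\bD^b\to M\to 0$, choose $r$ with $b_r$ exceeding the degrees of a finite generating set of $\ker(\bD^b\to M)$. Since every element of $\bD^b$ of degree $<b_r$ lies in $\bD_{<r}^b = \bD_{<r}^b\otimes 1$ and $\bD_{\ge r}$ acts through the second tensor factor, that kernel equals $N\otimes_\bk\bD_{\ge r}$ for a graded $\bD_{<r}$-submodule $N\subseteq\bD_{<r}^b$; hence $M\cong(\bD_{<r}^b/N)\otimes_\bk\bD_{\ge r}$. As $\bD_{<r}^b/N$ is a finite-dimensional graded $\bk$-vector space, $M$ is a finite direct sum of shifts of $\bD_{\ge r}$, in particular free over $\bD_{\ge r}$, so $\epsilon(M)<\infty$ and $g_r(M)$ is the top nonzero degree of $\overline M^{(r)}\coloneq M/\bD_{\ge r,+}M$. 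For any $r'\ge\epsilon(M)$, freeness over $\bD_{\ge r'}$ gives $M\cong\overline M^{(r')}\otimes_\bk\bD_{\ge r'}$; stepping from $r'$ to $r'+1$ through the free extension $\bD_{\ge r'} = \bk[y_{r'}]/(y_{r'}^{c_{r'}})\otimes_\bk\bD_{\ge r'+1}$ shows $\overline M^{(r'+1)}\cong\overline M^{(r')}\otimes_\bk\bk[y_{r'}]/(y_{r'}^{c_{r'}})$, so $g_{r'+1}(M) = g_{r'}(M)+b_{r'+1}-b_{r'}$ and therefore $g_{r'}(M)+1-b_{r'}$ is independent of $r'\ge\epsilon(M)$; this common value is $\lambda(M)$.

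For (a) I would compute directly with $r=\epsilon(M)$: from $M\cong\overline M^{(r)}\otimes_\bk\bD_{\ge r}$ and the Hilbert series above, $\dim_\bk M_n = \sum_{k\ge 0}\dim_\bk\overline M^{(r)}_{n-kb_r}$. If $n\equiv m\pmod{b_r}$ with $m=n+kb_r\ge n$, the two sums differ exactly in the terms $\dim_\bk\overline M^{(r)}_{n+b_r},\dots,\dim_\bk\overline M^{(r)}_{n+kb_r}$, which all vanish once $n+b_r>g_r(M)$, i.e. once $n\ge g_r(M)+1-b_r=\lambda(M)$; this is (a). For (b), put $r=\max(\epsilon(K),\epsilon(M))$. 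Since $M$ is free, hence projective, over $\bD_{\ge r}$, the sequence splits over $\bD_{\ge r}$, so $L\cong K\oplus M$ as graded $\bD_{\ge r}$-modules; thus $L$ is free over $\bD_{\ge r}$ (so $\epsilon(L)\le r$), $g_r(L)=\max(g_r(K),g_r(M))$, and subtracting $b_r$ from both sides gives $\lambda(L)=\max(\lambda(K),\lambda(M))$ (all three $\lambda$'s computed at the common index $r$). For (c), write $N=A/B$ with $B\subseteq A\subseteq M$ finitely generated; by coherence of $\bD$ these are finitely presented, and applying (b) to $0\to B\to A\to N\to 0$ and to $0\to A\to M\to M/A\to 0$ gives $\lambda(N)\le\lambda(A)\le\lambda(M)$.

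The step I expect to require the most care is the structural input in the second paragraph: that a finitely presented $\bD$-module is free over $\bD_{\ge r}$ for $r\gg 0$, with degree of generation evolving linearly in $b_r$. Everything else is bookkeeping. This is exactly the content of \cite[\S 3.3]{periodicity}, whose argument applies here essentially verbatim, the $q$-divided-power case differing from the divided-power case only in the value of the initial truncation ratio $c_0$.
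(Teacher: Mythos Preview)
Your argument is correct. Note, however, that the paper does not supply its own proof of this proposition: it is stated with a citation to \cite[\S 3.3]{periodicity} and no further justification, so there is nothing in the present paper to compare against. What you have written is a faithful reconstruction of the cited argument, and you yourself observe as much in your final paragraph; the tensor splitting $\bD=\bD_{<r}\otimes_\bk\bD_{\ge r}$, the Hilbert series computation for $\bD_{\ge r}$, and the splitting-over-$\bD_{\ge r}$ argument for part~(b) are exactly the ingredients used there.
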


\begin{remark} \label{rmk:itder}
Suppose $n=b_r$ for some $r$. Then ${n \brack i}_q=0$ for $0<i<n$, and so $d^n$ is a $q$-derivation on $\bD$ of degree $-n$ (Corollary~\ref{cor:itder}). Suppose that $M$ is a graded right $\bD$-module with a connection with respect to $d^n$. Then an easy variant of Proposition~\ref{prop:connection} shows that $M$ is free over $\bD_{\ge r}$. In particular, we see that $\epsilon(M) \le r$.
\end{remark}

For an integer $n$, we let $\fl(n)$ be the smallest non-negative integer $r$ such that $n \le b_r$. Note that $\fl(a+b) \le \max(\fl(a), \fl(b))+1$.

\begin{lemma}  \label{d:epsilon}
  Let $f \colon M \to N$ be a degree-preserving map of finitely presented $\bD$-modules. Let $r=\max(\epsilon(M), \epsilon(N))$. The matrix of $f$ with respect to any $\bD_{\ge r}$-module bases for $M$ and $N$  only involve the variables $y_r, \ldots, y_{s-1}$ where $s = \max(\epsilon(M), \epsilon(N), \fl(\lambda(M)))+1$.
\end{lemma}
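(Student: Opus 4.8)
The plan is a direct degree count. The relevant feature of $\bD_{\ge r}$ is that it is a connected graded $\bk$-algebra generated by the variables $y_i$ with $i \ge r$, whose degrees $\deg y_i = b_i$ are strictly increasing; hence only finitely many of these variables are visible in any bounded range of degrees, and a degree-preserving map between finite free $\bD_{\ge r}$-modules can only have matrix entries of bounded degree.

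First I would set up the bases. Since $r \ge \epsilon(M)$ and $r \ge \epsilon(N)$, both $M$ and $N$ are finite free graded $\bD_{\ge r}$-modules; fix homogeneous bases $e_1,\dots,e_a$ of $M$ and $f_1,\dots,f_b$ of $N$. As $\bD_{\ge r}$ is connected graded (its generators $y_i$, $i \ge r$, have positive degrees $b_i \ge 1$), a homogeneous basis of a free module is a minimal homogeneous generating set (graded Nakayama), so all the $e_j$ lie in degrees $\le g_r(M)$, the degree of generation of $M$ over $\bD_{\ge r}$; of course $\deg f_i \ge 0$. Writing $f(e_j) = \sum_i a_{ij} f_i$ with $a_{ij} \in \bD_{\ge r}$ homogeneous, the hypothesis that $f$ is degree-preserving forces $\deg a_{ij} = \deg e_j - \deg f_i \le \deg e_j \le g_r(M)$ whenever $a_{ij} \ne 0$. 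Thus every matrix entry of $f$ is a homogeneous element of $\bD_{\ge r}$ of degree $\le g_r(M)$.

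Next I would bound which variables can occur. Any monomial in the generators $\{y_i : i \ge r\}$ involving $y_v$ has degree $\ge b_v$, and since $b_\bullet$ is strictly increasing we have $b_v \ge b_{\fl(d+1)} \ge d+1$ whenever $v \ge \fl(d+1)$; hence a homogeneous element of $\bD_{\ge r}$ of degree $\le d$ is a $\bk$-linear combination of monomials in $y_r,\dots,y_{\fl(d+1)-1}$ only. Taking $d = g_r(M)$ shows each $a_{ij}$ involves only $y_r,\dots,y_{\fl(g_r(M)+1)-1}$. Finally I would rewrite the index: since $r \ge \epsilon(M)$, the definition of $\lambda$ gives $g_r(M)+1 = \lambda(M) + b_r$, so combining the inequality $\fl(x+y) \le \max(\fl(x),\fl(y)) + 1$ with $\fl(b_r) = r$,
\[
\fl(g_r(M)+1) = \fl(\lambda(M)+b_r) \le \max(\fl(\lambda(M)),\, r) + 1 = s.
\]
Therefore each $a_{ij}$ involves only $y_r,\dots,y_{s-1}$, which is the claim. (The only facts about $N$ that entered are $\deg f_i \ge 0$ and $r \ge \epsilon(N)$, which is why $\lambda(N)$ plays no role.)

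The argument is short and essentially all the content is bookkeeping; the one place that needs care is using the sharp observation ``degree $\le d$ forces variable index $< \fl(d+1)$'' rather than the weaker ``index $\le \fl(d)$'', since it is precisely this sharp form that makes the final estimate land at $s-1$ rather than $s$. A secondary routine point is the standard fact that a homogeneous basis of a finite free module over a connected graded ring lies in degrees at most the degree of generation.
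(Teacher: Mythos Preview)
Your argument is correct and is essentially the paper's own proof: both pick $\bD_{\ge r}$-bases, bound the degree of each matrix entry by $g_r(M) = \lambda(M) + b_r - 1 < b_s$ using that the basis vectors of $N$ sit in non-negative degree, and conclude that only $y_r,\dots,y_{s-1}$ can appear. The paper just compresses your $\fl$-bookkeeping into the single inequality $\deg(a_{ij}) \le \deg(e_j) < b_r + \lambda(M) \le b_s$.
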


\begin{proof}
Let $\{v_i\}$ and $\{w_j\}$ be bases for $M$ and $N$ as $\bD_{\ge r}$-modules. Then $f(v_i)=\sum_j a_{i,j} w_j$ for some $a_{i,j} \in \bD_{\ge r}$. Since the $w_j$ have non-negative degree, we find
	\begin{displaymath}
	\deg(a_{i,j}) \le \deg(v_i) < b_r+\lambda(M) \le b_s.
	\end{displaymath}
	Thus $a_{i,j}$ can only involve the variables $y_r, \ldots, y_{s-1}$.
\end{proof}

\begin{proposition}	\label{d:hom}
  Let
  \begin{displaymath}
    M_1 \stackrel{f}{\to} M_2 \stackrel{g}{\to} M_3
  \end{displaymath}
  be a complex of finitely presented $\bD$-modules, and let $H=\ker(g)/\im(f)$ be the homology. Then we have the bound
	\begin{displaymath}
	\epsilon(H) \le \max \big( \epsilon(M_1), \epsilon(M_2), \epsilon(M_3), \fl(\lambda(M_1)), \fl(\lambda(M_2)) \big)+1.
	\end{displaymath}
\end{proposition}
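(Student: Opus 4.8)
The plan is to pick $\bD_{\ge r}$-module bases for $M_1, M_2, M_3$ where $r = \max(\epsilon(M_1), \epsilon(M_2), \epsilon(M_3))$, and use Lemma~\ref{d:epsilon} to observe that the matrices of $f$ and $g$ only involve the finitely many variables $y_r, \ldots, y_{s-1}$, where $s = \max(\epsilon(M_1), \epsilon(M_2), \epsilon(M_3), \fl(\lambda(M_1)), \fl(\lambda(M_2)))+1$. Here I would apply Lemma~\ref{d:epsilon} to $f$ (which bounds its matrix entries in terms of $\lambda(M_1)$) and to $g$ (bounding in terms of $\lambda(M_2)$). Thus the entire complex $M_1 \to M_2 \to M_3$ is ``defined over'' the subring $\bD_{[r,s)} = \bD_{\{r, \ldots, s-1\}}$: writing $M_i = \bD_{\ge r} \otimes_{\bD_{[r,s)}} M_i'$ for free $\bD_{[r,s)}$-modules $M_i'$ with the maps $f', g'$ having these matrices, we have $M_i = \bD_{\ge r} \otimes_{\bD_{[r,s)}} M_i'$ and the given complex is obtained by base change along $\bD_{[r,s)} \hookrightarrow \bD_{\ge r}$.

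Next I would use that $\bD_{\ge r} = \bD_{[r,s)} \otimes_{\bk} \bD_{\ge s}$ as algebras (this follows from the presentation $\bD \cong \bk[y_0, y_1, \ldots]/(y_i^{b_{i+1}/b_i})$, since the variables $y_r, \ldots, y_{s-1}$ and $y_s, y_{s+1}, \ldots$ involve disjoint sets of generators). In particular $\bD_{\ge r}$ is a free — hence faithfully flat — $\bD_{[r,s)}$-module. Since $M_i = \bD_{\ge r} \otimes_{\bD_{[r,s)}} M_i'$, flatness gives $\ker(g) = \bD_{\ge r} \otimes_{\bD_{[r,s)}} \ker(g')$ and $\im(f) = \bD_{\ge r} \otimes_{\bD_{[r,s)}} \im(f')$, so
\begin{displaymath}
H = \bD_{\ge r} \otimes_{\bD_{[r,s)}} H', \qquad H' = \ker(g')/\im(f').
\end{displaymath}
Now $\bD_{[r,s)}$ involves only the variables $y_r, \ldots, y_{s-1}$, and all of its generators $y_s, y_{s+1}, \ldots$ of $\bD_{\ge s}$ are ``invisible'' to $H'$; concretely, $H = \bD_{\ge s} \otimes_{\bk} H'$ as a $\bD_{\ge s}$-module, which exhibits $H$ as a free $\bD_{\ge s}$-module. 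Therefore $\epsilon(H) \le s$, which is exactly the claimed bound.

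The one point requiring care — and the main (mild) obstacle — is the algebra decomposition $\bD_{\ge r} \cong \bD_{[r,s)} \otimes_{\bk} \bD_{\ge s}$ and checking that it is compatible with the module structures, i.e.\ that $M_i$, defined with matrices over $\bD_{[r,s)}$, really does become free over $\bD_{\ge s}$ after extending scalars to $\bD_{\ge r}$. This is immediate from $\bD_{\ge r} = \bD_{[r,s)} \otimes_\bk \bD_{\ge s}$: extending a free $\bD_{[r,s)}$-module along this gives $\bD_{\ge s} \otimes_\bk (\text{free } \bD_{[r,s)}\text{-module}) = $ a free $\bD_{\ge s}$-module. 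One must also double-check the hypothesis of Lemma~\ref{d:epsilon} applies verbatim: it bounds the matrix of a single map $M \to N$ by $s = \max(\epsilon(M), \epsilon(N), \fl(\lambda(M)))+1$, so for $f \colon M_1 \to M_2$ we get variables up to index $\max(\epsilon(M_1), \epsilon(M_2), \fl(\lambda(M_1)))$ and for $g \colon M_2 \to M_3$ up to $\max(\epsilon(M_2), \epsilon(M_3), \fl(\lambda(M_2)))$; taking the max of these two bounds (and adding $1$) is exactly the asserted value of $\epsilon(H)$. No genuinely hard step arises; the content is entirely the faithfully-flat-base-change observation, packaged via Lemma~\ref{d:epsilon}.
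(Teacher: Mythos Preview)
Your proposal is correct and follows essentially the same route as the paper: pick $\bD_{\ge r}$-bases with $r=\max(\epsilon(M_i))$, invoke Lemma~\ref{d:epsilon} to see the matrices of $f$ and $g$ live in $\bD_{[r,s)}$, then use the tensor decomposition $\bD_{\ge r}=\bD_{[r,s)}\otimes_{\bk}\bD_{\ge s}$ to conclude that $H$ is obtained from a $\bD_{[r,s)}$-module by $-\otimes_{\bk}\bD_{\ge s}$ and is hence free over $\bD_{\ge s}$. The paper phrases the last step directly as applying $-\otimes_{\bk}\bD_{\ge s}$ to the complex of free $\bD_{[r,s)}$-modules (an exact functor), whereas you route it through faithfully flat base change along $\bD_{[r,s)}\hookrightarrow\bD_{\ge r}$; these are the same computation.
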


\begin{proof}
  Let $r=\max(\epsilon(M_1), \epsilon(M_2), \epsilon(M_3))$, and pick bases for $M_1$, $M_2$, and $M_3$ as $\bD_{\ge r}$-modules. By Lemma~\ref{d:epsilon}, we see that the matrix entries for $f$ and $g$ only involve the variables $y_r, \ldots, y_{s-1}$, where $s$ is the right side of the bound in the statement of the proposition. Let $\bD_{[r,s)}$ be the subring of $\bD$ generated by $y_r,\dots,y_{s-1}$. Let $\ol{M}_i$ be a free $\bD_{[r,s)}$-module with the same basis as $M_i$, and define $\ol{f} \colon \ol{M}_1 \to \ol{M}_2$ and $\ol{g} \colon \ol{M}_2 \to \ol{M}_3$ using the same matrices that define $f$ and $g$. Then the original sequence $M_1 \to M_2 \to M_3$ is obtained from the sequence $\ol{M}_1 \to \ol{M}_2 \to \ol{M}_3$ by applying $-\otimes_{\bk} \bD_{\ge s}$. Thus if $V=\ker(\ol{g})/\im(\ol{f})$ then $H=V \otimes_{\bk} \bD_{\ge s}$, and so $H$ is free over $\bD_{\ge s}$.
\end{proof}

\begin{proposition}	\label{d:spectral}
  Suppose $r \ge 0$. Let $\rE^{\ast,\ast}_{\ast}$ be a  cohomological spectral sequence of finitely presented $\bD$-modules supported on $\{ (a,b)  \colon -r \le a \le r,\ b \ge 0 \}$. Put
  \begin{displaymath}
    \fl^t=\max_{a+b=t}{\fl(\lambda(\rE^{a,b}_1))}, \qquad \epsilon^t_k=\max_{a+b=t}{\epsilon(\rE^{a,b}_k)}.
  \end{displaymath}
  Then for $k \ge 0$, we have the bound
  \begin{displaymath}
    \epsilon^t_{1+k} \le \max(\epsilon_1^{t-k}, \ldots, \epsilon_1^{t+k}, \fl^{t-k}, \ldots, \fl^{t+k-1})+k
  \end{displaymath}
  In particular, if there is a convergence $\rE_1^{a,b} \implies H^{a+b}$ then
  \begin{displaymath}
    \epsilon(H^t) \le \max(\epsilon_1^{t - 2r-1}, \ldots, \epsilon_1^{t + 2t+1}, \fl^{t - 2r -1}, \ldots, \fl^{t + 2r})+2r + 1.
  \end{displaymath}
\end{proposition}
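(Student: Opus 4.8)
The plan is to prove the displayed inequality by induction on $k$, and then deduce the convergence bound from it. The base case $k=0$ is immediate, since the range of $\fl$-indices is then empty and the bound reads $\epsilon^t_1\le\epsilon^t_1$. For the inductive step, recall that in the cohomological convention the differential $d_{k+1}$ has bidegree $(k+1,-k)$, so $\rE^{a,b}_{k+2}$ is the homology at the middle term of the complex
\[
\rE^{a-k-1,\,b+k}_{k+1}\longrightarrow\rE^{a,b}_{k+1}\longrightarrow\rE^{a+k+1,\,b-k}_{k+1},
\]
whose three entries lie in total degrees $t-1$, $t$, $t+1$, where $t=a+b$. Every page of the spectral sequence consists of finitely presented $\bD$-modules---either by hypothesis, or because $\bD$ is coherent, so that kernels, subquotients, and homologies of maps of finitely presented modules are again finitely presented---so that $\epsilon$ and $\lambda$ are defined for every module that appears and Proposition~\ref{d:hom} is applicable.

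First I would apply Proposition~\ref{d:hom} to the three-term complex above: it bounds $\epsilon(\rE^{a,b}_{k+2})$ by $1$ plus the maximum of the $\epsilon$-invariants of the three entries together with the $\fl(\lambda(-))$-invariants of the first two. Since each page is a subquotient of the preceding one, hence of $\rE_1$, Proposition~\ref{prop:lambda-epsiolon-invariants}(c) gives $\lambda(\rE^{a,b}_{k+1})\le\lambda(\rE^{a,b}_1)$ at every spot, and $\fl$ is nondecreasing, so those two invariants are at most $\fl^{t-1}$ and $\fl^{t}$. Taking the maximum over $a+b=t$ yields
\[
\epsilon^t_{k+2}\le\max\bigl(\epsilon^{t-1}_{k+1},\,\epsilon^{t}_{k+1},\,\epsilon^{t+1}_{k+1},\,\fl^{t-1},\,\fl^t\bigr)+1 .
\]
Now I would substitute the inductive hypothesis for $\epsilon^{t-1}_{k+1}$, $\epsilon^{t}_{k+1}$, $\epsilon^{t+1}_{k+1}$ and observe that the union of the three resulting $\epsilon$-ranges is exactly $\epsilon_1^{t-k-1},\dots,\epsilon_1^{t+k+1}$, while the union of the three $\fl$-ranges is exactly $\fl^{t-k-1},\dots,\fl^{t+k}$ (and $\fl^{t-1}$, $\fl^t$ already lie in this range); adding $1$ reproduces precisely the claimed bound for $k+1$. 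This last step is purely bookkeeping with the index ranges.

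For the convergence statement, the key point is that $d_k$ shifts the first coordinate by $k$ while that coordinate is confined to the length-$2r$ interval $[-r,r]$; hence $d_k=0$ for $k\ge 2r+1$, so $\rE_{2r+2}=\rE_\infty$ and in particular $\epsilon^t_\infty=\epsilon^t_{2r+2}$. A convergent spectral sequence supported in $-r\le a\le r$ equips $H^t$ with a finite filtration of length at most $2r+1$ whose successive quotients are the $\rE^{a,b}_\infty$ with $a+b=t$; these, and their iterated extensions, are finitely presented over the coherent ring $\bD$, so applying Proposition~\ref{prop:lambda-epsiolon-invariants}(b) step by step along the filtration gives $\epsilon(H^t)\le\epsilon^t_\infty=\epsilon^t_{2r+2}$. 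Feeding $k=2r+1$ into the inequality just proved then gives exactly the stated bound. I do not expect a real obstacle here: the argument is the $q$-analogue of the one in \cite{periodicity}, and the only things requiring care are fixing the bidegree of $d_{k+1}$, routing the $\fl(\lambda(-))$-bounds through Proposition~\ref{prop:lambda-epsiolon-invariants}(c) so that $\fl^t$ is read off the first page rather than off $\rE_{k+1}$, the index bookkeeping in the induction, and the coherence remarks guaranteeing that $\epsilon$ and $\lambda$ make sense for every module involved.
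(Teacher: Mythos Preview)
Your argument is correct and follows essentially the same route as the paper's proof: both use Proposition~\ref{prop:lambda-epsiolon-invariants}(c) to control $\lambda$ on later pages by the first page, apply Proposition~\ref{d:hom} to obtain the same one-step recurrence $\epsilon^t_{k+1}\le\max(\epsilon^{t-1}_k,\epsilon^t_k,\epsilon^{t+1}_k,\fl^{t-1},\fl^t)+1$, iterate it, and then combine degeneration at page $2r+2$ with Proposition~\ref{prop:lambda-epsiolon-invariants}(b) along the filtration of $H^t$. Your version simply spells out a few points (coherence of $\bD$, the bidegree of $d_{k+1}$, the index bookkeeping) that the paper leaves implicit.
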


\begin{proof}
  First note that $\lambda(\rE^{a,b}_{k+1}) \le \lambda(\rE^{a,b}_k)$ by Proposition~\ref{prop:lambda-epsiolon-invariants}, and so for the purposes of upper bounds, we can essentially regard $\lambda$ as unchanging from one page to the next. With this in mind, Proposition~\ref{d:hom} gives us the inequality
  \begin{displaymath}
    \epsilon_k^t \le \max(\epsilon_{k-1}^{t-1}, \epsilon_{k-1}^t, \epsilon_{k-1}^{t+1}, \fl^{t-1}, \fl^t)+1.
  \end{displaymath}
  Iterating this several times gives the bound on $\epsilon^t_{1+k}$ that appears in the proposition. For the statement about $\epsilon(H^t)$, note that $\rE^{a,b}_{\infty} = \rE^{a,b}_{2r+2}$, since on the $(2r+2)$th page and beyond, all differentials in and out of the $(a,b)$ entry are zero. Thus $H^t$ has a filtration where the associated graded pieces are $\rE^{a,b}_{2r+2}$ where $a+b=t$, and so $\epsilon(H^t) \le \epsilon^t_{2r+2}$ by Proposition~\ref{prop:lambda-epsiolon-invariants}. 
\end{proof}

\section{Finite general linear groups} \label{s:gl}

\subsection{$\VB$-modules}

Fix a prime number $p$ and a power $q$ of $p$, and assume $p$ is invertible in the field $\bk$. Let $\VB$ be the groupoid of finite dimensional $\bF_q$-vector spaces. The category $\Mod_{\VB}$ of $\VB$-modules admits a natural tensor product, due to \cite{joyal-street}, defined as follows:
\begin{displaymath}
(M \otimes N)(V) = \bigoplus_{U \subset V} M(U) \otimes N(V/U).
\end{displaymath} 
Define a map
\begin{displaymath}
\beta \colon M \otimes N \to N \otimes M
\end{displaymath}
as follows. For $x \in M(U)$ and $y \in N(V/U)$, put
\begin{displaymath}
\beta(x \otimes y) = \sum_{V=U \oplus W} \nu^{U,W}_*(y) \otimes \mu^{U,W}_*(x),
\end{displaymath}
where the sum is over all complementary spaces $W$ to $U$, and $\mu^{U,W} \colon U \to V/W$ and $\nu^{U,W} \colon V/U \to W$ are the natural isomorphisms. Then it is known that $\beta$ defines a braiding on the tensor product $\otimes$. (Joyal and Street \cite{joyal-street} proved this in the case when $\bk$ is of characteristic 0. See \cite[Theorem~14.3]{james} for the general case.)

We now consider the functor
\begin{displaymath}
\Gamma \colon \Mod_{\VB} \to \GrVec, \qquad
\Gamma(M) = \bigoplus_{n \ge 0} M(\bF_q^n)^{\bG_n}.
\end{displaymath}
We equip $\GrVec$ with its usual tensor product, but define a new braiding $\beta$ by
\begin{displaymath}
\beta(x \otimes y) = q^{\deg(x) \deg(y)} y \otimes x.
\end{displaymath}
Note that this is an isomorphism since we have assumed $q$ is invertible in $\bk$, and it is easy to verify the braid relation. We then have:

\begin{proposition}
$\Gamma$ is a strict braided monoidal functor (in a natural way).
\end{proposition}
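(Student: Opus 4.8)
The plan is to establish the three ingredients of a braided monoidal functor in turn: a natural structure isomorphism $J_{M,N}\colon\Gamma(M)\otimes\Gamma(N)\to\Gamma(M\otimes N)$, compatibility of $J$ with the associativity and unit constraints, and compatibility of $J$ with the two braidings. The first two are essentially the computation done for $\FB$ in \cite[Proposition~4.1]{periodicity}, with subspaces in place of subsets; the braiding compatibility is where the factor $q^{\deg(x)\deg(y)}$ must appear, and it is the only genuinely new point.

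To build $J$, I would fix $\VB$-modules $M,N$ and an integer $n$, and analyze $\Gamma(M\otimes N)_n=\bigl(\bigoplus_{U\subseteq\bF_q^n}M(U)\otimes N(\bF_q^n/U)\bigr)^{\bG_n}$. Since $\bG_n$ acts transitively on the set of $k$-dimensional subspaces, the invariants split as a sum over $k$ of $\bigl(M(U)\otimes N(\bF_q^n/U)\bigr)^{\bP}$ for a fixed $k$-dimensional $U$, where $\bP=\mathrm{Stab}_{\bG_n}(U)$. Now $\bP$ surjects onto $\GL(U)\times\GL(\bF_q^n/U)$ with kernel the unipotent radical $\Hom(\bF_q^n/U,U)$, which acts trivially on $M(U)$ and on $N(\bF_q^n/U)$ because $M$ and $N$ are functors on the groupoid $\VB$; hence these $\bP$-invariants equal $M(U)^{\GL(U)}\otimes N(\bF_q^n/U)^{\GL(\bF_q^n/U)}$, using that the $G\times H$-invariants of an external tensor product $V\otimes_{\bk}W$ are $V^G\otimes_{\bk}W^H$ (pick a basis of each factor; this needs no hypothesis on $\bk$, and is why the whole argument is insensitive to the characteristic). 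Finally, any isomorphism $\bF_q^k\to U$ identifies $M(U)^{\GL(U)}$ with $\Gamma(M)_k$ canonically, since two such isomorphisms differ by an element of $\GL(\bF_q^k)$, which acts trivially on invariants. Assembling these identifications gives $J_{M,N}$, and naturality in $M,N$ is clear.

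For the constraints, I would note that $\Gamma\bigl((M\otimes N)\otimes P\bigr)_n$ is computed the same way using flags $0\subseteq U_1\subseteq U_2\subseteq\bF_q^n$ (the stabilizer of a flag is again a parabolic whose unipotent radical acts trivially and whose reductive quotient is a product of three general linear groups), yielding $\bigoplus_{i+j+k=n}\Gamma(M)_i\otimes\Gamma(N)_j\otimes\Gamma(P)_k$ independently of the bracketing, and that $\Gamma$ of the unit $\VB$-module is $\bk$ in degree $0$ with $(M\otimes\mathbf 1)(V)=M(V)$; so $\Gamma$ is monoidal. For the braiding, I would take homogeneous $x\in\Gamma(M)_k$, $y\in\Gamma(N)_l$, set $V=\bF_q^{k+l}$, and describe $J_{M,N}(x\otimes y)$ as the unique $\GL(V)$-invariant $\xi\in(M\otimes N)(V)$ whose component in the summand of a fixed $k$-dimensional subspace $U_0$ is $x_{U_0}\otimes y_{U_0}$ (the transports of $x,y$), the remaining components being forced by invariance. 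Applying the Joyal--Street braiding $\beta$ and reading off the component of $\beta(\xi)$ at a fixed $l$-dimensional subspace $W_0$ produces a sum over the $k$-dimensional subspaces $U$ complementary to $W_0$ in $V$; there are $q^{kl}$ of them, and because $x_{U}$ and $y_{U}$ are invariant under the full general linear groups of $U$ and $V/U$ while $\mu^{U,W_0}$ and $\nu^{U,W_0}$ are isomorphisms, each summand $\nu^{U,W_0}_*(y_U)\otimes\mu^{U,W_0}_*(x_U)$ equals one and the same element $y_{W_0}\otimes x_{V/W_0}$, independent of $U$ and of the auxiliary isomorphisms. Thus the $W_0$-component of $\Gamma(\beta)(\xi)$ is $q^{kl}\,(y_{W_0}\otimes x_{V/W_0})$, which is exactly $J_{N,M}$ applied to $q^{kl}\,y\otimes x=\beta_{\GrVec}(x\otimes y)$, giving the required identity $\Gamma(\beta)\circ J_{M,N}=J_{N,M}\circ\beta_{\GrVec}$.

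The hard part will be this last computation, specifically reconciling the Joyal--Street braiding (a sum over complementary subspaces) with the orbit--stabilizer description of $\bG_n$-invariants and seeing that the $q^{kl}$ terms collapse to a single term scaled by $q^{kl}$. This collapse uses precisely that elements of $\Gamma(M)$ and $\Gamma(N)$ are genuine $\GL$-invariants, not arbitrary values of the modules, and it is where the deformation parameter enters the target braiding; by contrast, constructing $J_{M,N}$ and checking the associativity and unit coherences are routine bookkeeping parallel to the symmetric-group case.
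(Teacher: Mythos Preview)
Your proposal is correct and follows essentially the same route as the paper. The paper phrases the monoidal structure via parabolic induction and Frobenius reciprocity, writing $J_{M,N}(x\otimes y)$ as the explicit sum $\sum_{V}\rho^V_*(x)\otimes\rho^{\bF_q^n/V}_*(y)$ over all $k$-dimensional subspaces, whereas you describe the same element via orbit--stabilizer as the unique invariant with prescribed component at one chosen subspace; these are two descriptions of the same map, and your braiding computation (fixing an $l$-dimensional $W_0$, summing over its $q^{kl}$ complements, and collapsing the sum using invariance) is exactly the calculation the paper performs.
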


\begin{proof}
Let $M$ and $N$ be $\VB$-modules. We then have a natural isomorphism
\begin{displaymath}
(M \otimes N)(\bF_q^n) = \bigoplus_{r+s=n} \Ind_{\bP_{r,s}}^{\bG_n}(M(\bF_q^r) \otimes N(\bF_q^s)).
\end{displaymath}
We thus see, on applying Frobenius reciprocity, that
\begin{displaymath}
(M \otimes N)(\bF_q^n)^{\bG_n} \cong \bigoplus_{r+s=n} M(\bF_q^r)^{\bG_r} \otimes N(\bF_q^s)^{\bG_s}.
\end{displaymath}
This isomorphism exactly shows that $\Gamma$ is naturally a strict monoidal functor. To be completely explicit, suppose $x \in M(\bF_q^r)^{\bG_r}$ and $y \in N(\bF_q^s)^{\bG_s}$. Then the map
\begin{displaymath}
\Gamma(M)_r \otimes \Gamma(N)_s \to \Gamma(M \otimes N)_{r+s}
\end{displaymath}
takes $x \otimes y$ to
\begin{displaymath}
\sum_{V \subset \bF_q^n} \rho^V_*(x) \otimes \rho^{\bF^n/V}_*(y),
\end{displaymath}
where the sum is over all $r$ dimensional subspaces $V$ of $\bF_q^n$, and for a $t$ dimensional vector space $U$ we let $\rho^U \colon \bF_q^t \to U$ be an arbitrary isomorphism. Note that $\rho^V_*(x)$ is independent of the choice of isomorphism since $x$ is $\bG_r$ invariant.

We now show that $\Gamma$ is compatible with the braiding. For this, we must verify that the diagram
\begin{displaymath}
\xymatrix{
\Gamma(M)_r \otimes \Gamma(N)_s \ar[r] \ar[d]_{\beta} & \Gamma(M \otimes N)_{r+s} \ar[d]^{\Gamma(\beta)} \\
\Gamma(N)_s \otimes \Gamma(M)_r \ar[r] & \Gamma(N \otimes M)_{r+s} }
\end{displaymath}
commutes. Thus let $x \in \Gamma(M)_r$ and $y \in \Gamma(N)_s$ be given. Going to the right and then down, we obtain
\begin{displaymath}
\sum_{V \subset \bF_q^n} \sum_{\bF_q^n=V \oplus U} \nu^{V,U}_*(\rho^{\bF_q^n/V}_*(y)) \otimes \mu^{V,U}_*(\rho^V_*(x)),
\end{displaymath}
where the outer sum is over $r$-dimensional subspaces $V$ of $\bF_q^n$, the inner sum is over complements $U$ to $V$, and $\mu^{V,U} \colon V \to \bF_q^n/U$ and $\nu^{V,U} \colon \bF_q^n/V \to U$ are the natural isomorphisms. Now, $\nu^{V,U} \circ \rho^{\bF_q^n/V}$ is an isomorphism $\bF_q^s \to U$, and so we may as well call it $\rho^U$, and similarly we may as well replace $\mu^{V,U} \circ \rho^V$ with $\rho^{\bF^n/U}$. We thus obtain
\begin{displaymath}
\sum_{\bF_q^n=V \oplus U} \rho^U_*(y) \otimes \rho^{\bF_q^n/U}_*(x).
\end{displaymath}
Now, the choice of $V$ in the decomposition does not affect the value of the summand. For each $U$, there are $q^{rs}$ choices of complement $V$, and so the above sum is equal to
\begin{displaymath}
q^{rs} \sum_{U \subset \bF_q^n} \rho^U_*(y) \otimes \rho^{\bF_q^n/U}_*(x),
\end{displaymath}
where the sum is over $s$-dimensional subspaces $U$ of $\bF_q^n$.

Now suppose we first go down in the diagram and then to the right. Going down yields $q^{rs} y \otimes x$, and then going to the right gives
\begin{displaymath}
q^{rs} \sum_{U \subset \bF_q^n} \rho^U_*(y) \otimes \rho^{\bF_q^n/U}_*(x),
\end{displaymath}
where the sum is over $s$-dimensional subspaces $U$ of $\bF_q^n$. We have thus verified that the diagram commutes, which completes the proof.
\end{proof}

We now establish the following somewhat technical result that we will need when considering $\rR \Gamma$:

\begin{proposition}
Let $I$ and $J$ be injective $\VB$-modules. Then $I \otimes J$ is $\Gamma$-acyclic.
\end{proposition}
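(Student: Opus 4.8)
The plan is to reduce the statement to the known structure of injective $\VB$-modules, and then to a cohomology vanishing computation for the groups $\bG_n$. First I would recall that $\VB$ is a groupoid, so an injective $\VB$-module is the same thing as a projective $\VB$-module, and both are the same as an arbitrary $\VB$-module that is pointwise a direct sum of (co)induced modules; concretely, every $\VB$-module decomposes as a direct sum of modules of the form $M_V = \bigoplus_{\varphi} \bk$ supported at the isomorphism class of a fixed $V$, i.e.\ $M_V(W) = \bk[\mathrm{Iso}(V,W)]$. Since $\Gamma$ and $\otimes$ both commute with arbitrary direct sums, and since the class of $\Gamma$-acyclic objects is closed under direct sums (as $\Gamma$ and its derived functors commute with direct sums — each $\bG_n$-cohomology does), it suffices to treat $I = M_V$ and $J = M_{V'}$ for fixed finite-dimensional $\bF_q$-vector spaces $V, V'$.

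Next I would compute $I \otimes J$ explicitly using the Day-convolution formula: $(M_V \otimes M_{V'})(W) = \bigoplus_{U \subset W} \bk[\mathrm{Iso}(V,U)] \otimes \bk[\mathrm{Iso}(V', W/U)]$. Grouping the subspaces $U$ by the $\bG(W)$-orbit (there is a single orbit of subspaces of each dimension) and using the orbit-stabilizer description, one sees that $(M_V \otimes M_{V'})(W)$, as a $\bG(W) = \bG_n$-representation with $n = \dim W$, is a direct sum of copies of $\Ind_{H}^{\bG_n} \bk$ where $H$ runs over stabilizers of flags $0 \subset U \subset W$ with $\dim U = \dim V$ — that is, $H = \bP_{r,s}$ with $r = \dim V$, $s = \dim V' = n - r$, possibly intersected with the automorphism groups $\bG(U) \times \bG(W/U)$ acting freely on the $\mathrm{Iso}$ factors. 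Carefully bookkeeping the free actions, $(M_V \otimes M_{V'})(W)$ is (when $\dim W = \dim V + \dim V'$) a sum of copies of $\Ind_{N}^{\bG_n}\bk$ where $N$ is the unipotent radical of $\bP_{r,s}$, i.e.\ the parabolic's unipotent part; for other dimensions of $W$ it is zero. By Shapiro's lemma, $\rH^\bullet(\bG_n, (I\otimes J)_n) \cong \rH^\bullet(N, \bk)^{\oplus}$, and similarly $\Gamma$ applied to the injective resolution computes $\rH^0$.

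The key point is then that $N$ is a $p$-group (a group of unipotent upper-triangular block matrices over $\bF_q$), so since $p$ is invertible in $\bk$, $\rH^i(N, \bk) = 0$ for $i > 0$ and $\rH^0(N,\bk) = \bk$. Hence $\rR^i\Gamma(I \otimes J) = 0$ for $i > 0$, which is exactly $\Gamma$-acyclicity. I expect the main obstacle to be the bookkeeping in the second step: correctly identifying, as a $\bG_n$-representation, the summands of $(M_V \otimes M_{V'})(W)$ and pinning down the relevant subgroup as a $p$-group rather than merely as a parabolic (whose cohomology need not vanish). One clean way to organize this is to observe directly that $M_V \otimes M_{V'}$ is, as a $\VB$-module, a direct sum of modules of the form $W \mapsto \bk[X(W)]$ where $X(W)$ is a $\bG(W)$-set on which stabilizers are $p$-groups, and that $\Gamma$ of such a ``permutation'' $\VB$-module is computed by taking $\bG_n$-orbits while its higher derived functors vanish because $p$-group cohomology with $\bk$-coefficients is trivial in positive degrees; this sidesteps the need for an explicit injective resolution and makes the $p$-group appearance transparent.
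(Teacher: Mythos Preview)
Your approach is correct in outline and reaches the same conclusion, but it differs from the paper's proof and contains one imprecision you should fix.

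\textbf{The imprecision.} You write that ``every $\VB$-module decomposes as a direct sum of modules of the form $M_V$'' with $M_V(W)=\bk[\mathrm{Iso}(V,W)]$. This is false: $M_V$ is the regular representation of $\bG_r$ placed in degree $r=\dim V$, and an arbitrary (even an arbitrary injective) $\bk[\bG_r]$-module is not a direct sum of copies of the regular representation. What is true, and what you need, is that every injective $\VB$-module is a direct \emph{summand} of a direct sum of $M_V$'s (since $\bk[\bG_r]$ is self-injective, so injective $=$ projective $=$ summand of free). Since $\otimes$ preserves direct sums and summands, and $\Gamma$-acyclicity is closed under both, this is enough to reduce to $I=M_V$, $J=M_{V'}$. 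With that correction your computation is right: $(M_V\otimes M_{V'})(\bF_q^{r+s})$ is the permutation module on triples $(U,\varphi,\psi)$, which is a single transitive $\bG_{r+s}$-set with point stabilizer the unipotent radical $N$ of $\bP_{r,s}$, hence $\Ind_N^{\bG_{r+s}}\bk$; Shapiro and the $p$-group vanishing finish it.

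\textbf{Comparison with the paper.} The paper does \emph{not} reduce to regular representations. It reduces only to $I$ and $J$ concentrated in single degrees $r$ and $s$, identifies $(I\otimes J)(\bF_q^{r+s})\cong \Ind_{\bP_{r,s}}^{\bG_{r+s}}(I\otimes J)$ with the parabolic acting through its Levi, applies Shapiro to land in $\rH^i(\bP_{r,s},I\otimes J)$, then uses the Hochschild--Serre spectral sequence for $N\trianglelefteq \bP_{r,s}$ (which collapses because $N$ is a $p$-group) and K\"unneth for $\bG_r\times\bG_s$ to conclude vanishing from the injectivity of $I$ and $J$. Your route trades the spectral sequence and K\"unneth for an extra reduction step and a more explicit identification of the induced module; the paper's route is slightly more robust in that it never needs to know what the injectives look like beyond their being injective. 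Both hinge on the same fact, that the unipotent radical is a $p$-group with $p$ invertible in $\bk$.
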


\begin{proof}
It suffices to treat the case where $I$ and $J$ are each concentrated in a single degree, say degrees $r$ and $s$. Thus we treat $I$ as an injective $\bk[\GL_r]$-module and $J$ as an injective $\bk[\GL_s]$-module. We can then identify $I \otimes J$ with the $\bk[\GL_{r+s}]$-module $\Ind_{\bP_{r,s}}^{\GL_{r+s}}(I \otimes J)$. We have
\begin{displaymath}
\rR^i \Gamma(I \otimes J) = \rH^i(\GL_{r+s}, \Ind_{\bP_{r,s}}^{\GL_{r+s}}(I \otimes J)) = \rH^i(\bP_{r,s}, I \otimes J)
\end{displaymath}
by Shapiro's lemma. Now, let $\bU_{r,s}$ be the unipotent subgroup of $\bP_{r,s}$, which is normal with quotient $\GL_r \times \GL_s$. We thus have a spectral sequence
\begin{displaymath}
\rE^{a,b}_2 = \rH^a(\bU_{r,s}, \rH^b(\GL_r \times \GL_s, I \otimes J)) \implies \rH^{a+b}(\bP_{r,s}, I \otimes J).
\end{displaymath}
However, $\bU_{r,s}$ is a $p$-group and since $p$ is invertible in $\bk$ it has no higher cohomology. We conclude
\begin{displaymath}
\rH^i(\bP_{r,s}, I \otimes J)=\rH^i(\GL_r \times \GL_s, I \otimes J)^{\bU_{r,s}},
\end{displaymath}
which vanishes for $i>0$ by the K\"unneth formula.
\end{proof}

\subsection{First proof of Theorem~\ref{thm:q-construction}}

Let $\bA$ be the $\VB$-module defined by $\bA(V)=\bk \cdot t^V$. This is naturally an algebra in $\Mod_{\VB}$. Let $\bk\{1\}$ be the $\VB$-module that is $\bigoplus_{v \in L \setminus \{0\}} \bk \cdot e_v$ on one-dimensional spaces $L$, and~0 on all other spaces. We note that $\bk\{1\}$ is $\Gamma$-acyclic, as $(\bk\{1\})(L)$ is the regular representation of $\GL(L)$ when $L$ is one-dimensional.

Define a map
\begin{displaymath}
d \colon \bA \to \bA \otimes \bk\{1\}, \qquad
d(t^V) = \sum_{C \subset V} \sum_{v \in (V/C) \setminus \{0\} } t^{C} \otimes e_v,
\end{displaymath}
where the sum is over subspaces $C$ of $V$ of codimension~1. As in the symmetric group case, we have:

\begin{proposition}
The map $d$ is a derivation.
\end{proposition}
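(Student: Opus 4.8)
The plan is to verify the Leibniz rule of \S\ref{s:congen} directly on the natural basis of $\bA \otimes \bA$. An element of $(\bA \otimes \bA)(V)$ is a $\bk$-linear combination of the vectors $t^U \otimes t^{V/U}$, one for each subspace $U \subseteq V$, and the multiplication $m$ sends $t^U \otimes t^{V/U} \mapsto t^V$. So it suffices to show that the three composites $a, b, c \colon \bA \otimes \bA \to \bA \otimes \bk\{1\}$ of \S\ref{s:congen} satisfy $c = a + b$ after evaluation on each $t^U \otimes t^{V/U}$. Two of these are immediate. For $c$ we get $d(t^V) = \sum_C \sum_{v \in (V/C) \setminus \{0\}} t^C \otimes e_v$, the outer sum over all codimension-one subspaces $C \subseteq V$. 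For $b$, applying $1 \otimes d$ replaces $t^{V/U}$ by $d(t^{V/U})$; since the codimension-one subspaces of $V/U$ are exactly the $C/U$ for codimension-one $C$ with $U \subseteq C \subseteq V$, and $(V/U)/(C/U) \cong V/C$, applying $m \otimes 1$ gives $b(t^U \otimes t^{V/U}) = \sum_{C \supseteq U} \sum_{v \in (V/C) \setminus \{0\}} t^C \otimes e_v$, the sum over codimension-one $C$ containing $U$.

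It remains to compute $a$ and check that it equals the complementary sum $\sum_{C \not\supseteq U} \sum_{v \in (V/C) \setminus \{0\}} t^C \otimes e_v$ over codimension-one $C$ not containing $U$; adding this to $b$ recovers $c$. Applying $d \otimes 1$ to $t^U \otimes t^{V/U}$ gives $\sum_{C'} \sum_{v' \in (U/C') \setminus \{0\}} t^{C'} \otimes e_{v'} \otimes t^{V/U}$, the sum over codimension-one $C' \subseteq U$, this term living on the flag $C' \subseteq U \subseteq V$. Next one applies $1 \otimes \beta$, which acts on the last two tensor factors, i.e., on the summand $\bk\{1\}(U/C') \otimes \bA(V/U)$ of $(\bk\{1\} \otimes \bA)(V/C')$ indexed by the line $U/C' \subseteq V/C'$. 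By the definition of $\beta$, the element $\beta(e_{v'} \otimes t^{V/U})$ is the sum, over complements $W'$ to the line $U/C'$ in $V/C'$, of $t^{W'} \otimes e_{\overline{v'}}$, where $\overline{v'}$ is the image of $v'$ under the isomorphism $U/C' \to (V/C')/W'$. Complements $W'$ to $U/C'$ in $V/C'$ are precisely the subspaces $D/C'$ for $D$ a codimension-one subspace of $V$ containing $C'$ but not $U$ (equivalently $D \cap U = C'$), and then $(V/C')/W' \cong V/D$. Hence $(1 \otimes \beta)(t^{C'} \otimes e_{v'} \otimes t^{V/U}) = \sum_D t^{C'} \otimes t^{D/C'} \otimes e_{\overline{v'}}$, and applying $m \otimes 1$ (which sends $t^{C'} \otimes t^{D/C'} \mapsto t^D$) gives $a(t^U \otimes t^{V/U}) = \sum_{C'} \sum_{v'} \sum_D t^D \otimes e_{\overline{v'}}$.

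To conclude I would reorganize this triple sum by summing over $D$ first: given a codimension-one $D \subseteq V$ with $U \not\subseteq D$, the hyperplane $C' = D \cap U$ of $U$ is forced, and as $v'$ ranges over $(U/C') \setminus \{0\}$ its image $\overline{v'}$ ranges bijectively over $(V/D) \setminus \{0\}$ (both are one-dimensional, and $U/C' = U/(D \cap U) \cong V/D$). Thus $a(t^U \otimes t^{V/U}) = \sum_{D \not\supseteq U} \sum_{w \in (V/D) \setminus \{0\}} t^D \otimes e_w$, exactly as needed. (As a consistency check, the number of codimension-one $D \supseteq C'$ with $U \not\subseteq D$ is $q^{\dim V - \dim U}$, which matches the number of complements to a line in $V/C'$.) I expect the one genuinely delicate point to be the computation of $a$: one must keep track of which tensor factors $1 \otimes \beta$ acts on, unwind the Joyal--Street formula for $\beta$ on $\bk\{1\} \otimes \bA$, and then carry out the reindexing $(C', v', W') \leftrightarrow (D, w)$; naturality of $d$ in $V$ is automatic since every ingredient is defined intrinsically. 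The argument is the exact $q$-analogue of, and is organized like, the corresponding verification for the symmetric group in \S\ref{s:symmetric}.
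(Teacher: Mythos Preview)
Your proof is correct and follows essentially the same approach as the paper's: both split $d(t^V)$ according to whether the codimension-one subspace $C$ contains $U$, identifying the $C \supseteq U$ terms with $t^U \cdot d(t^{V/U})$ and the $C \not\supseteq U$ terms with $d(t^U) \cdot t^{V/U}$ via the same bijection $(C',W') \leftrightarrow D$ (your $W'$ is the paper's $C''$, your $D$ is the paper's $C$, with inverse $D \mapsto (D\cap U,\, D/(D\cap U))$). Your version tracks the label $e_v$ a bit more explicitly than the paper does, but otherwise the arguments are the same.
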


\begin{proof} 
Consider a short exact sequence
\begin{displaymath}
0 \to U \to V \to W \to 0.
\end{displaymath}
It suffices to prove
\begin{displaymath}
d(t^V) = t^U \cdot d(t^W) + d(t^U) \cdot t^W,
\end{displaymath}
where $d(t^U) \cdot t^W$ indicates to first apply the braiding to switch $t^W$ and the $e$'s, and then multiply. More precisely, $d(t^U) \cdot t^W = (m \otimes 1)(1 \otimes \beta)(d(t^U) \otimes t^W)$. We have
\begin{displaymath}
d(t^V) = \sum_{C \subset V} \sum_{v \in (V/C) \setminus \{0\}} t^C \otimes e_v 
\end{displaymath}
and \begin{align*}
t^U \cdot d(t^W) &= t^U \cdot \sum_{C' \subset W} \sum_{v \in (W/ C') \setminus \{0\}} t^{C'}\otimes e_v \\
& = \sum_{U \subset C \subset V} \sum_{v \in (V/ C) \setminus \{0\}} t^{C}\otimes e_v. 
\end{align*}
Now, we have
\begin{align*}
d(t^U) \cdot t^W &= (m \otimes 1)(1 \otimes \beta)\left(\sum_{C' \subset U} \sum_{v \in  (U/C') \setminus \{0\}} t^{C'} \otimes e_v \otimes t^{W}\right) \\
&= (m \otimes 1)\left(\sum_{C' \subset U} \sum_{V/C' = U/C' \oplus C''} \sum_{v \in  (U/C') \setminus \{0\}} t^{C'} \otimes t^{C''} \otimes e_v\right) \\
& = \sum_{C \subset V, U \not\subset C} \sum_{v \in  (V/C) \setminus \{0\}} t^{C}  \otimes e_v,
\end{align*} where the last equality above follows from the following bijective correspondence: \begin{align*}
\{(C', C'') \colon C' \subset U, V/C' = U/C' \oplus C'', \dim C' = \dim U -1   \} &\to \{C \colon U \not\subset C, \dim C = \dim V -1 \}\\
(C', C'') &\mapsto (V \to V/C')^{-1}(C'')\\
(C \cap U, C/(C \cap U)) & \mapsfrom C, 
\end{align*}  the multiplication $m(t^{C'} \otimes t^{C''}) = t^C$, and the fact that $U/C' = V/C$. Now it is clear that \begin{displaymath}
d(t^V) = t^U \cdot d(t^W) + d(t^U) \cdot t^W,
\end{displaymath} completing the proof.
\end{proof}

\begin{proposition}  $\Gamma(\bA)$ is the $q$-divided power algebra $\bD$, and the derivation on it takes $x^{[n]}$ to $x^{[n-1]}$.
\end{proposition}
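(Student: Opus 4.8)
The plan is to compute $\Gamma(\bA)$ directly from the definitions and match it with the $q$-divided power algebra $\bD$ from \S\ref{s:qdp}, then track the derivation. First I would observe that since $\bA(V) = \bk \cdot t^V$ is the trivial representation of $\GL(V)$ in each degree, we have $\bA(\bF_q^n)^{\bG_n} = \bk \cdot t^{\bF_q^n}$, so $\Gamma(\bA) = \bigoplus_{n \ge 0} \bk \cdot t^{\bF_q^n}$ is one-dimensional in each degree $n \ge 0$; write $x^{[n]}$ for the class of $t^{\bF_q^n}$. This gives an isomorphism of graded vector spaces $\Gamma(\bA) \cong \bD$, and the content is to check that the multiplication matches.

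Next I would unwind the strict monoidal structure on $\Gamma$ established in the previous proposition. By the explicit description given there, the product $\Gamma(\bA)_r \otimes \Gamma(\bA)_s \to \Gamma(\bA)_{r+s}$ sends $x^{[r]} \otimes x^{[s]}$ to $\sum_{V \subset \bF_q^{r+s}} \rho^V_*(t^{\bF_q^r}) \otimes \rho^{\bF_q^{r+s}/V}_*(t^{\bF_q^s})$, where the sum runs over $r$-dimensional subspaces $V$. Each summand is just $t^V \otimes t^{\bF_q^{r+s}/V}$, which under the product map of $\bA$ becomes $t^{\bF_q^{r+s}}$; hence the value is $\bigl(\#\{r\text{-dim }V \subset \bF_q^{r+s}\}\bigr)\, x^{[r+s]} = {r+s \brack r}_q\, x^{[r+s]}$. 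This is exactly the defining relation $x^{[r]} \cdot x^{[s]} = {r+s \brack r}_q\, x^{[r+s]}$ of the $q$-divided power algebra, so $\Gamma(\bA) \cong \bD$ as algebras.

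Finally I would identify the derivation. Since $\bk\{1\}$ is $\Gamma$-acyclic and concentrated in degree $1$ with $(\bk\{1\})(L)$ the regular representation of $\GL(L)$, we get $\Gamma(\bk\{1\}) = \bk$ in degree $1$, i.e. $\Gamma(\bk\{1\}) = \bk\{1\}$ in $\GrVec$, and $\Gamma(\bA \otimes \bk\{1\})_n \cong \Gamma(\bA)_{n-1}$; concretely, $(\bA \otimes \bk\{1\})(\bF_q^n) = \bigoplus_{C \subset \bF_q^n} \bk\cdot t^C \otimes (\bk\{1\})(\bF_q^n/C)$ over codimension-$1$ subspaces $C$, whose $\bG_n$-invariants (using Frobenius reciprocity / Shapiro as in the general framework) are one-dimensional in degree $n$. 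Applying $\Gamma$ to $d(t^{\bF_q^n}) = \sum_{C} \sum_{v \in (\bF_q^n/C) \setminus\{0\}} t^C \otimes e_v$ and passing to invariants collapses the class to $x^{[n-1]}$ (the $\bG_n$-orbit sum over all $(C,v)$ is identified with $t^{\bF_q^{n-1}}$ under Shapiro's lemma, exactly parallel to the symmetric-group discussion in \S\ref{s:symmetric}). Hence $\Gamma(d)(x^{[n]}) = x^{[n-1]}$.

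The main obstacle is bookkeeping rather than conceptual: one must be careful that the counting of $r$-dimensional subspaces produces ${r+s \brack r}_q$ with the correct normalization (as opposed to $q$-twisted variants), and that the identification $\Gamma(\bA\otimes\bk\{1\})_n \cong \Gamma(\bA)_{n-1}$ via Shapiro's lemma really sends the invariant class of $d(t^{\bF_q^n})$ to $x^{[n-1]}$ with coefficient $1$; this is the same verification as in \cite[Proposition~4.1]{periodicity} transported along the $\VB$-analog, and a reference to that together with the explicit orbit computation suffices.
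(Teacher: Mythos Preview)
Your proof is correct and follows essentially the same approach as the paper's own proof. You use the explicit formula for the monoidal structure on $\Gamma$ (from the preceding proposition) to count $r$-dimensional subspaces of $\bF_q^{r+s}$ and obtain the $q$-binomial coefficient, whereas the paper phrases the same computation via the composite $\rH^0(\bP_{n,m}) \to \rH^0(\bG_{n+m},\Ind_{\bP_{n,m}}^{\bG_{n+m}}\bk) \to \rH^0(\bG_{n+m})$ and the index $[\bG_{n+m}:\bP_{n,m}]$; these are the same argument in different notation. For the derivation, both you and the paper identify $(\bA\otimes\bk\{1\})(\bF_q^n)$ with $\Ind_{\bP_{n-1,\ol 1}}^{\bG_n}(\bk)$ and invoke Shapiro plus the inflation isomorphism to land on $x^{[n-1]}$.
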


\begin{proof}
  Note that $\Gamma(\bA)$ is isomorphic to $\bD$ as $\bk$-vector spaces. It suffices to check that the multiplication on  $\Gamma(\bA)$ agrees with the one on $\bD$. The multiplication map \[\Gamma(\bA)_n \otimes \Gamma(\bA)_m \to \Gamma(\bA_n \otimes \bA_m) \to \Gamma(\bA)_{n+m}\] is given by the composite
  \[
    \bk \otimes \bk = \Gamma(\bA)_n \otimes \Gamma(\bA)_m \to \rH^0(\bP_{n,m}) \to \rH^0(\bG_{n+m} ,\Ind_{\bP_{n,m}}^{\bG_{n+m}} \bk) \to \rH^0(\bG_{n+m}) = \Gamma(\bA)_{n+m} = \bk,
  \]
  which takes
  \[
    a \otimes b  \mapsto ab \mapsto \sum_{\gamma \in \bG_{n,m}/P_{n,m}} \gamma ab \to {n +m \brack n}_q ab.
  \]
  Thus $\Gamma(\bA)$ is the $q$-divided power algebra. 

Now note that $\Gamma(\bk\{1\}) = \bk\{1\}$. The derivation
\[
  \Gamma(\bA) \to \Gamma(\bA) \otimes\bk\{1\} = \Gamma(\bA)[1]
\]
in degree $n$ is given by the composite
\[
  \bA_n^{\bG_n} \to (\bA_{n-1}\otimes \bk\{1\}  )^{\bG_n} \to \bk^{\bP_{n-1, \ol{1}}} \to \bA_{n-1}^{\bG_{n-1}},
\]
where the second map comes from Shapiro's lemma, and the third map is the inverse of the inflation isomorphism. It is clear that, under the isomorphism $\Gamma(\bA) = \bD$, this map  takes $x^{[n]}$ to $x^{[n-1]}$. This completes the proof.
\end{proof}

We are now in a position to apply the general theory of \S \ref{s:congen} to prove Theorem~\ref{thm:q-construction}. We find that $\rR \Gamma(\bA)$ is an algebra with derivation of degree $-1$; moreover, $\Gamma(\bA)$ is a subalgebra with derivation and each $\rR^i \Gamma(\bA)$ is a $\Gamma(\bA)$-module with connection. Note here that ``derivation'' and ``connection'' are taken with respect to the braiding $\beta$; concretely, these translate to $q$-derivation and $q$-connection. By the previous proposition, $\Gamma(\bA)$ is the $q$-divided power algebra, and the derivation on it takes $x^{[n]}$ to $x^{[n-1]}$. Moreover, the connection on $\rR^i \Gamma(\bA)=\bigoplus_{n \ge 0} \rH^i(\bG_n, \bk)$ is induced from the derivation $d \colon \bA \to \bA \otimes \bk\{1\}$, and thus is the map
\begin{align*}
\rH^i(\bG_n, \bk) &\to \rH^i(\bG_n, (\bA \otimes \bk\{1\} )(\bF_q^n)) \cong \rH^i(\bG_n, \Ind_{\bP_{n-1,\ol{1}}}^{\GL_n}(\bk)) \\
&\cong \rH^i(\bP_{n-1,\ol{1}}, \bk) \cong \rH^i(\bG_{n-1},\bk),
\end{align*}
which is just the restriction map (corresponding to the subgroup $\bP_{n-1,\ol{1}}$) composed with the inverse of the inflation map (corresponding to the projection $\bP_{n-1,\ol{1}} \to \bG_{n-1}$). 

\subsection{A direct proof of the main theorem}

We now prove Theorem~\ref{thm:q-construction} directly, without any category theory. See \cite[\S 4.2]{periodicity} for the symmetric group analog of this argument. By $\bP_{n_1, n_2, \ldots,n_d}$, we mean the group of $n_d\times n_d$ block upper-triangular invertible matrices over $\bF_q$ with block sizes $n_1, n_2, \ldots , n_d$. An overline $\ol{n}_i$ signifies that the $i$th diagonal block must be the identity matrix, and a superscript $\bP^{(i,j)}$ signifies that the $(i,j)$ block must be 0.

\begin{lemma}	\label{lem:mid-portion}
  The following diagram  has the property that the middle path is the sum of the top and the bottom paths.
  \begin{displaymath}
    \begin{tikzcd}
      \rH^{t}(\bP_{n, m-1, \ol{1}}) \ar[bend left=15]{rrd}{\cor} \\
      \rH^{t}(\bP_{n,m} ) \ar{r}{\cor}  \ar[swap]{u}{\res}  \ar{d}{\res} & \rH^{t}(\bG_{n+m} ) \ar{r}{\res} & \rH^{t}(\bP_{n + m-1,\ol{1}} )  \\
      \rH^{t}(\bP^{(2,3)}_{n-1, \ol{1},  m} )  \ar{rr}{\zeta^{*}} & & \rH^{t}(\bP^{(2,3)}_{n-1, m,  \ol{1}} ) \ar{u}{\cor}
    \end{tikzcd}
  \end{displaymath}
  Here $\zeta$ in the permutation that switches blocks $2$ and $3$, and $\cor$ denote the corestriction (i.e., the transfer map).
\end{lemma}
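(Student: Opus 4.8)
The plan is to compute the middle path $\res^G_K \circ \cor^G_H$, with $G = \bG_{n+m}$, $H = \bP_{n,m}$, and $K = \bP_{n+m-1,\ol 1}$, by means of the Mackey double coset formula, and then to recognize the two terms that appear as the top and the bottom paths. The double coset formula for the composite of a transfer (corestriction) and a restriction reads
\begin{displaymath}
\res^G_K \circ \cor^G_H = \sum_{KgH \in K\backslash G/H} \cor^K_{K \cap gHg^{-1}} \circ c_g \circ \res^H_{H \cap g^{-1}Kg},
\end{displaymath}
where $c_g \colon \rH^t(H \cap g^{-1}Kg) \to \rH^t(K \cap gHg^{-1})$ is the isomorphism induced by conjugation $y \mapsto gyg^{-1}$. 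So the first step is to describe $K\backslash G/H$. The point is that $K$ is the stabilizer in $G$ of the functional $e_{n+m}^* \in (\bF_q^{n+m})^*$ (it preserves $\ker e_{n+m}^* = \langle e_1, \dots, e_{n+m-1}\rangle$ and acts as the identity on the quotient), while $H$ is the stabilizer of $V \coloneq \langle e_1, \dots, e_n\rangle$; hence $K\backslash G/H$ is the set of $G$-orbits on pairs $(\phi, V')$ with $\phi$ a nonzero functional and $V'$ an $n$-dimensional subspace, and by transitivity of $\GL_{n+m}(\bF_q)$ on partial flags of a fixed type there are exactly two orbits, according to whether $\phi$ vanishes on $V'$ or not.

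I would then identify the two terms. For the orbit with $\phi|_{V'} = 0$ take $g = 1$: since $V \subseteq \ker e_{n+m}^*$, a computation in coordinates shows $K \cap H = \bP_{n,m-1,\ol 1}$, so this term is $\cor^{\bP_{n+m-1,\ol 1}}_{\bP_{n,m-1,\ol 1}} \circ \res^{\bP_{n,m}}_{\bP_{n,m-1,\ol 1}}$, i.e.\ the top path. For the orbit with $\phi|_{V'} \neq 0$ take $g$ to be the permutation matrix of $\zeta$ (viewed in $G$ through the block sizes $n-1, 1, m$): then $e_{n+m}^* \circ g = e_n^*$ and $g(V) = \langle e_1, \dots, e_{n-1}, e_{n+m}\rangle$, so that $H \cap g^{-1}Kg$ is the stabilizer of $(e_n^*, V)$, which one identifies with $\bP^{(2,3)}_{n-1,\ol 1, m}$, while $K \cap gHg^{-1}$ is the stabilizer of $(e_{n+m}^*, g(V))$, which one identifies with $\bP^{(2,3)}_{n-1, m, \ol 1}$; moreover the conjugation $y \mapsto gyg^{-1}$ carries the former group onto the latter and induces exactly the map $\zeta^*$ of the statement. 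Hence this term is $\cor^{\bP_{n+m-1,\ol 1}}_{\bP^{(2,3)}_{n-1, m, \ol 1}} \circ \zeta^* \circ \res^{\bP_{n,m}}_{\bP^{(2,3)}_{n-1,\ol 1, m}}$, i.e.\ the bottom path. Summing the two terms gives the lemma.

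The step I expect to be the main obstacle is the one that is, in the end, just bookkeeping: choosing the coordinate flags so that each of $K \cap H$, $H \cap g^{-1}Kg$, and $K \cap gHg^{-1}$ is literally the indicated decorated parabolic (with the block orders, the overlines, and the $(i,j)$ superscripts all matching), and checking that the conjugation isomorphism occurring in the Mackey formula coincides with the relabelling map $\zeta^*$. None of this is deep, but it requires care with conventions; the rest is a formal consequence of the double coset formula.
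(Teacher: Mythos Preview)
Your proposal is correct and is essentially the paper's own argument: the paper also shows $|\bP_{n+m-1,\ol 1}\backslash \bG_{n+m}/\bP_{n,m}|=2$ by identifying $\bG_{n+m}/\bP_{n,m}$ with $n$-dimensional subspaces and splitting into the cases $W\subseteq\bF_q^{n+m-1}$ and $W\not\subseteq\bF_q^{n+m-1}$ (your ``$\phi|_{V'}=0$ or not''), computes the two intersections $\bP_{n+m-1,\ol 1}\cap\bP_{n,m}=\bP_{n,m-1,\ol 1}$ and $\bP_{n+m-1,\ol 1}\cap\bH=\bP^{(2,3)}_{n-1,m,\ol 1}$, and then invokes the Mackey double coset formula (cited as \cite[Proposition~III.9.5]{bro}). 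Your write-up is in fact more explicit than the paper's about the conjugation map being $\zeta^*$, which the paper leaves implicit in the phrase ``the rest follows.''
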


\begin{proof}
  We claim that $|\bP_{n + m-1,\ol{1}} \backslash \bG_{n+m}/ \bP_{n,m}| = 2$. To see this, first note that $\bG_{n+m}/ \bP_{n,m}$ can be identified with the set of $n$-dimensional subspaces in $\bF_q^{n+m}$ via sending a matrix $g$ to the span of its first $n$ columns. Since $\bP_{n + m-1,\ol{1}}$ contains $\bG_{n+m-1}$, we see that all $n$-dimensional subspaces contained in $\bF_q^{n+m-1}$ form a single orbit under the action of $\bP_{n + m-1,\ol{1}}$. Now suppose $W$ is a subspace not contained in $\bF_q^{n+m-1}$. It is easy to see that there is an element $\sigma$ of $\bP_{n + m-1,\ol{1}}$, such that $\be_{n+m}  \in \sigma W$. But then $W$ can be written as $W_1 \oplus \bF_q \be_{n+m}$ where $W_1$ is an $(n-1)$-dimensional subspace of $\bF_q^{n+m-1}$. By transitivity of the action of $\bG_{n+m-1}$, we can assume that $W_1$ is the span of $\be_1, \be_2, \ldots, \be_{n-1}$. This proves the claim.
	
  Now note that $\bP_{n,m}$ is the stabilizer of the space spanned by $\be_1, \be_2, \ldots, \be_{n}$. Let $\bH$ be the stabilizer of the space spanned by $\be_1, \be_2, \ldots, \be_{n-1}, \be_{n+m}$. We see that
  \[
    \bP_{n + m-1,\ol{1}} \cap \bP_{n,m} = \bP_{n, m-1, \ol{1}}, \qquad
    \bP_{n + m-1,\ol{1}} \cap \bH = \bP^{(2,3)}_{n-1, m,  \ol{1}}.
  \]
  The rest follows from \cite[Proposition~III.9.5]{bro}.
\end{proof}

\begin{lemma} \label{lem:coeffaceability}
  Suppose $G_1 \subset G_2$ are finite groups. Let  $N_2$ be a normal subgroup of  $G_2$, and set $N_1 = N_2 \cap G_1$. Let $V$ be a $\bk[G_2]$-module. Assume that the following conditions hold:
  \begin{enumerate}
  \item There is a subgroup $G \subset G_2$ containing $N_2$ and $G_1$ such that the inclusion $i \colon G_1/N_1 \to G/N_2$ is an isomorphism.
    
  \item The size of $N_2$ is invertible in $\bk$.
		
  \item $N_2$ acts trivially on $V$.
  \end{enumerate}
  Let $V$ be a $\bk[G_2]$-module. Then in the diagram below, the top path is $|N_2/N_1|$-times the bottom path. 	
  \begin{displaymath}
    \begin{tikzcd}	
      \rH^t(G_1, V) \ar{d}{\cong} \ar{r}{\cor}	&  \rH^t(G_2, V)   \\
      \rH^t(G_1/N_1, V) \ar{r}{\cor}	&  \rH^t(G_2/N_2, V) \ar{u}{\cong}
    \end{tikzcd}
  \end{displaymath}
Here the right vertical map is the inflation map and the left vertical map is the inverse of the inflation map.
\end{lemma}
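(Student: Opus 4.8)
The plan is to interpolate through the intermediate group $G$ supplied by the hypotheses and to reduce the statement to two standard compatibilities of the transfer. First I would record the consequences of the hypotheses. Since $N_1=N_2\cap G_1$ is normal in $G_1$ with $|N_1|$ dividing the invertible number $|N_2|$, and $N_1$ acts trivially on $V$, the Lyndon--Hochschild--Serre spectral sequence for the quotient $G_1\to G_1/N_1$ collapses, so the inflation map $\rH^t(G_1/N_1,V)\to\rH^t(G_1,V)$ is an isomorphism; the same holds for the quotients $G\to G/N_2$ and $G_2\to G_2/N_2$. The hypothesis that $i\colon G_1/N_1\to G/N_2$ is an isomorphism is equivalent to $G_1N_2=G$, and from this we get $[G:G_1]=[N_2:N_1]=|N_2/N_1|$, that the natural map $G_1/N_1\hookrightarrow G_2/N_2$ is injective with image $G/N_2$, and that this inclusion factors as $G_1/N_1\xrightarrow{i}G/N_2\hookrightarrow G_2/N_2$. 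Consequently, by transitivity of the transfer and the fact that the transfer along an isomorphism is the inverse of the corresponding restriction, the bottom map of the diagram equals $\cor_{G/N_2}^{G_2/N_2}\circ(i^{*})^{-1}$, where $i^{*}\colon\rH^t(G/N_2,V)\xrightarrow{\sim}\rH^t(G_1/N_1,V)$ is the isomorphism induced by $i$.

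Next I would compute the top map using the factorization $\cor_{G_1}^{G_2}=\cor_{G}^{G_2}\circ\cor_{G_1}^{G}$. For the inner factor I would combine the identity $\cor_{G_1}^{G}\circ\res_{G_1}^{G}=[G:G_1]\cdot\mathrm{id}$ on $\rH^t(G,V)$ with the commutativity of the inflation--restriction square; since $G_1$ surjects onto $G/N_2$, the latter reads $\res_{G_1}^{G}\circ\mathrm{inf}_{G/N_2}^{G}=\mathrm{inf}_{G_1/N_1}^{G_1}\circ i^{*}$, and, because $i^{*}$ and the two inflation maps are isomorphisms, these assemble into
\[
\cor_{G_1}^{G}=[G:G_1]\cdot\mathrm{inf}_{G/N_2}^{G}\circ(i^{*})^{-1}\circ\bigl(\mathrm{inf}_{G_1/N_1}^{G_1}\bigr)^{-1}.
\]
For the outer factor I would use that inflation commutes with the transfer when the normal subgroup is contained in the smaller group: since $N_2\subseteq G$ and $N_2$ is normal in $G_2$, we have $\cor_{G}^{G_2}\circ\mathrm{inf}_{G/N_2}^{G}=\mathrm{inf}_{G_2/N_2}^{G_2}\circ\cor_{G/N_2}^{G_2/N_2}$. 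Substituting these into the factorization and recognizing $\cor_{G/N_2}^{G_2/N_2}\circ(i^{*})^{-1}$ as the bottom map $\cor_{G_1/N_1}^{G_2/N_2}$, I would obtain
\[
\cor_{G_1}^{G_2}=|N_2/N_1|\cdot\mathrm{inf}_{G_2/N_2}^{G_2}\circ\cor_{G_1/N_1}^{G_2/N_2}\circ\bigl(\mathrm{inf}_{G_1/N_1}^{G_1}\bigr)^{-1},
\]
which is exactly the assertion that the top path of the diagram is $|N_2/N_1|$ times the bottom path.

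I expect the main obstacle to be the inflation/transfer compatibility used in the last step, which is less routinely quoted than inflation--restriction; I would establish it from the explicit cochain formula for the transfer, using that a set of coset representatives for $G\backslash G_2$ maps bijectively onto one for $(G/N_2)\backslash(G_2/N_2)$ (because $N_2\subseteq G$) and that $N_2$ acts trivially on $V$, so that the two transfer cochain formulas agree after inflation; see \cite[\S III.9--10]{bro} for these properties of the transfer. Apart from this, the work is bookkeeping, the one subtlety being to keep track of the several inflation isomorphisms so that the index $[G:G_1]=|N_2/N_1|$ emerges as a single overall scalar rather than being distributed through the composite.
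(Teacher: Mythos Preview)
Your argument is correct. You factor $\cor_{G_1}^{G_2}$ through the intermediate group $G$, extract the scalar $[G:G_1]=|N_2/N_1|$ from the identity $\cor_{G_1}^{G}\circ\res_{G_1}^{G}=[G:G_1]\cdot\id$ (legitimate because the inflation isomorphisms force $\res_{G_1}^{G}$ to be an isomorphism), and then pass to the quotient via the inflation--transfer compatibility for $N_2\subseteq G\trianglelefteq G_2$. All of the identities you invoke are standard, and your sketch of the inflation--transfer square via matching coset representatives is the right justification.

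The paper takes a different and shorter route: it fixes a single $\bk[G_2/N_2]$-injective resolution $V\to I^\bullet$, lets every group act through the quotient, and observes that hypothesis~(b) makes each $I^i$ acyclic for $\rH^0(K,-)$ for every subgroup $K\subseteq G_2$, so this one complex computes all four cohomology groups and all four maps simultaneously. The statement then reduces to degree $t=0$, where it is nothing but the index identity $[G_2:G_1]=|N_2/N_1|\cdot[G_2/N_2:G_1/N_1]$. The paper's approach is more economical and sidesteps the need to quote the inflation--transfer compatibility separately; your approach, on the other hand, makes the origin of the factor $|N_2/N_1|$ completely transparent (it is $[G:G_1]$, coming from a single $\cor\circ\res$) and does not require arguing that the inflated resolution is acyclic for every relevant invariants functor.
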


\begin{proof}
  Fix a $\bk[G_2/N_2]$-injective resolution $0 \to V \to I^{\bullet}$. We let $G_2$ act on $I$ via the projection $G_2 \to G_2/N_2$. It is also clear that $I^{i}$ is injective as a $K/(K \cap N_2)$-module for any subgroup $K$ of $G_2$. By Hypothesis (b),  $I^i$ is acyclic with respect to the functor $\rH^0(K, -)$ where $K$ is any subgroup of $G_2$. Hence we can calculate all the cohomology groups in the diagram above using the complex $I^{\bullet}$. Thus it suffices to prove the assertion in the case $t=0$, which follows from the easy identity
  \[
    [G_2:G_1] = |N_2/N_1| \cdot [G_2/N_2 : G_1/N_1]. \qedhere
  \]
\end{proof}

\begin{proof}[Proof of Theorem~\ref{thm:q-construction}]
  Note that the second assertion, that $d$ is surjective, follows from the first assertion and Proposition~\ref{prop:connection}. So it suffices to prove the first assertion. For that, we consider the following diagram.
	
	 {\tiny
		\begin{displaymath}
		\begin{tikzcd}
			\rH^a(\bG_n) \otimes \rH^b(\bG_{m-1}) \ar{r}{\res \circ \kappa}	& \rH^t(\bP_{n,m-1}) \ar{d}{\cong} \ar[bend left =18]{rrrdd}{\cor} \\
	\rH^a(\bG_n) \otimes 	\rH^b(\bP_{m-1, \ol{1}}) \ar{r}{\res \circ \kappa}   \ar{u}{\id \otimes \cong}  & \rH^{t}(\bP_{n, m-1, \ol{1}}) \ar[bend left=15]{rrd}{\cor} \\
		{  \rH^a(\bG_n ) \otimes 	\rH^b(\bG_{m})}  \ar{u}{\id \otimes \res  } \ar[swap]{d}{\res \otimes \id  } \ar{r}{\res \circ \kappa} & \rH^{t}(\bP_{n,m} ) \ar{r}{\cor}  \ar[swap]{u}{\res}  \ar{d}{\res} & \rH^{t}(\bG_{n+m} ) \ar{r}{\res} & \rH^{t}(\bP_{n + m-1,\ol{1}} ) \ar{r}{\cong} &  \rH^{t}(\bG_{n + m-1} ) \\
		 \rH^a(\bP_{n-1,\ol{1}} ) \otimes \rH^b(\bG_{m}) \ar{r}{\res \circ \kappa} \ar[swap]{d}{\cong \otimes \id} & \rH^{t}(\bP^{(2,3)}_{n-1, \ol{1},  m} ) \ar{d}{\cong} \ar{rr}{\zeta^*} & & \rH^{t}(\bP^{(2,3)}_{n-1, m,  \ol{1}} ) \ar{u}{\cor} \\
	\rH^a(\bG_{n-1} )  \otimes \rH^b(\bG_{m}) \ar{r}{\res \circ \kappa} & \rH^{t}(\bP_{n-1,  m} ) \ar[bend right =24, swap]{rrruu}{\cor}
		\end{tikzcd}
		\end{displaymath}
	} Here all the isomorphisms $\cong$ are either the inflation isomorphisms or their inverses, $\kappa$ denotes K\"unneth map, and $t = a+b$. Let $x \in \rH^a(\bG_n)$ and $y \in \rH^b(\bG_m)$. By construction, the top, the middle and the bottom paths applied to $x\otimes y$ are just $x d(y)$, $d(xy)$, and  $d(x) y$ respectively. Thus it suffices to show that
  \[
    \text{middle = top + $q^m$ (bottom)}.
  \]
  This follows immediately from  Lemma~\ref{lem:mid-portion} and the following claim:

\vskip.5\baselineskip\noindent
{\bf Claim}. All the regions on the periphery of the diagram are commutative except the bottom right, where the top path is $q^m$ times the bottom path:
    \begin{displaymath}
      \begin{tikzcd}	
        & & \rH^{t}(\bP_{n + m-1,\ol{1}} ) \ar{r}{\cong} &  \rH^{t}(\bG_{n + m-1} ) \\
        \rH^{t}(\bP^{(2,3)}_{n-1, \ol{1},  m} ) \ar{d}{\cong} \ar{rr}{\zeta^{*}} & & \rH^{t}(\bP^{(2,3)}_{n-1, m,  \ol{1}} ) \ar{u}{\cor} \\
        \rH^{t}(\bP_{n-1,  m} ) \ar[bend right =24, swap]{rrruu}{\cor}
      \end{tikzcd}
    \end{displaymath}

{\bf Proof of Claim.} The assertion on the bottom right region follows from  Lemma~\ref{lem:coeffaceability} by setting $G_1 = \bP^{(2,3)}_{n-1, m,  \ol{1}}$, $G_2 = \bP_{n + m-1,\ol{1}}$, $G = \bP_{n-1, m,  \ol{1}}$, and $N_2$ is the unipotent radical of $G_2$. We note here that the composite \[\rH^{t}(\bP^{(2,3)}_{n-1, m,  \ol{1}} ) \xrightarrow{(\zeta^{\ast})^{-1}}  \rH^{t}(\bP^{(2,3)}_{n-1, \ol{1},  m} ) \xrightarrow{\cong} \rH^{t}(\bP_{n-1,  m} ) \] is the inflation map for the projection $\bP^{(2,3)}_{n-1, m,  \ol{1}} \to \bP_{n-1,  m}$, and so Lemma~\ref{lem:coeffaceability} applies.

Similarly, the commutativity of the top right part follows from  Lemma~\ref{lem:coeffaceability} by setting $G_1 = G = \bP_{n, m-1,  \ol{1}}$, and $G_2 = \bP_{n + m-1,\ol{1}}$. The commutativity of the other four diagrams can be checked easily when $t=0$, and a similar argument as in the first paragraph of the proof of Lemma~\ref{lem:coeffaceability} can be used to see that $t=0$ case suffices. \qedhere
\end{proof}

\subsection{Proof of Corollary~\ref{cor:free-D-module}}

Note that $\bE^t = \bigoplus_{n \ge 0} \rH^t(\bG_n)$ is a module over $\bE^0 = \bD$  under the transfer product. Let $d \colon \bE^0 \to \bE^0[1]$, and $\nabla \colon \bE^t \to \bE^t[1]$ be the restrictions of the derivation $d \colon \bE \to \bE[1]$ as in Theorem~\ref{thm:q-construction}.  Upon identifying $\bE^0$ with $\bD$, the derivation $d$ is given by $x^{[n]} \to x^{[n-1]}$ on $\bD$, and $\nabla$ is  a $q$-connection on the $\bD$-module $\bE^t$ (Theorem~\ref{thm:q-construction}). Thus by Proposition~\ref{prop:connection}, we see that $\bE^t $ is isomorphic to $\bD \otimes_{\bk} \ker(\nabla)$. In particular, $\bE^t $ is a free $\bD$-module. To complete the proof, it suffices to show that $\ker(\nabla)$ is supported in degrees $\le t$, or that $\dim_{\bk} \rH^t(\bG_n)$ is constant for $n \ge 2t$. This follows from the homological stability result of Maazen \cite{maazen} (see also \cite[Theorem 4.11]{vanderkallen}). In the case when $q \neq 2$, the bound can be improved to $n \ge t$  using \cite[Theorem~A]{sprehn-wahl}. The last assertion follows from this.

\section{Periodicity in the cohomology of $\VI$-modules}
\label{s:vi}

\subsection{Application to $\VI$-modules}

Recall that $\bA$ is the algebra in $\Mod_{\VB}$ that is trivial in every degree. The category of right $\bA$-modules is equivalent to the category of $\VI$-modules (see \cite[Proposition~2.1]{VI-shift}), and we will not distinguish between the two. For a $\VB$-module $W$, we let $\cI(W) = W\otimes  \bA$; we refer to such $\VI$ modules as {\bf induced}.  In the case when $q$ is invertible in $\bk$, the category $\Mod_{\VI}$ is well understood. In particular, we have the following:

\begin{theorem}[{\cite[Theorem 1.1]{VI-shift}}]
Let $M$ be a finitely generated $\VI$-module. Then there exists a polynomial $P$  such that $\dim_{\bk} M(\bF_q^n) = P(q^n)$ for $n \gg 0$.
\end{theorem}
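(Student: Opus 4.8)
The plan is to reduce to a structural statement about finitely generated $\VI$-modules and then invoke the well-understood theory of $\Mod_{\VI}$ when $q$ is invertible in $\bk$. First I would recall the noetherianity of $\Mod_{\VI}$ in this setting (from the cited work on $\VI$-modules), so that any submodule or quotient of a finitely generated module is again finitely generated, and hence any finitely generated $M$ admits a finite-length filtration by ``nice'' pieces. The key input is that $\Mod_{\VI}$ has a ``shift theorem'': after applying the shift functor enough times, a finitely generated $\VI$-module becomes a direct sum of induced modules $\cI(W) = W \otimes \bA$ for finitely generated $\VB$-modules $W$ concentrated in finitely many degrees, up to a finite-dimensional (supported in finitely many degrees) error term. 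Equivalently, there is a short exact sequence $0 \to K \to M \to Q \to 0$ (or a longer resolution) in which $K$ and $Q$ are built from induced modules and finite modules.

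The main step is then an additivity argument on dimensions. For an induced module $\cI(W) = W \otimes \bA$ with $W$ supported in degree $r$, we have $\cI(W)(\bF_q^n) = \Ind$-type formula giving $\dim_{\bk} \cI(W)(\bF_q^n) = (\dim W) \cdot {n \brack r}_q$, which is a polynomial in $q^n$ of degree $r$; more generally for $W$ supported in degrees $\le r$ one gets a $\bk$-linear combination of the $q$-binomial coefficients ${n \brack j}_q$, each of which is a polynomial in $q^n$. Since $\dim$ is additive in short exact sequences, and since finite modules contribute $0$ for $n \gg 0$, the dimension function $\dim_{\bk} M(\bF_q^n)$ agrees, for $n \gg 0$, with a $\bZ_{\ge 0}$-linear combination of the functions $n \mapsto {n \brack j}_q$. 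Each such function is the evaluation at $q^n$ of a fixed polynomial (namely ${n \brack j}_q = \prod_{i=1}^{j} \frac{q^n - q^{i-1}}{q^j - q^{i-1}}$, which is manifestly a polynomial in the variable $q^n$ of degree $j$ once we clear the denominators, which are nonzero scalars since $q$ is invertible), so a finite sum of them is again a polynomial $P$ with $\dim_{\bk} M(\bF_q^n) = P(q^n)$ for $n \gg 0$.

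The hard part is packaging the input from the structure theory correctly: one needs not just that $M$ eventually looks induced, but a precise statement — a finite filtration or finite resolution of $M$ by (shifts of) induced modules plus modules supported in finitely many degrees — valid in the regime $q$ invertible in $\bk$. Assuming the cited results on $\Mod_{\VI}$ this is available, and the rest is the elementary polynomiality computation above. I would therefore structure the proof as: (1) cite noetherianity and the shift/induced-resolution theorem to get the finite filtration; (2) compute $\dim_{\bk} \cI(W)(\bF_q^n)$ as a polynomial in $q^n$ using the formula for $(M \otimes N)(\bF_q^n)$ and counting subspaces; (3) use additivity of dimension and vanishing of finite modules for large $n$ to conclude that $\dim_{\bk} M(\bF_q^n)$ is eventually a polynomial in $q^n$.
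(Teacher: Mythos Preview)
The paper does not prove this statement; it is simply quoted as Theorem~1.1 of \cite{VI-shift} and used as a black box. Your sketch is a correct outline of how the result is obtained in that reference: one uses the structure theory (the shift theorem, equivalently the exact-triangle decomposition into a finitely supported piece and a finite complex of induced modules) to reduce to induced modules, and then computes that $\dim_{\bk} \cI(W)(\bF_q^n)=\dim_{\bk}(W_r)\cdot{n \brack r}_q$ is a polynomial of degree $r$ in $q^n$; additivity of dimension on exact sequences finishes the argument. The present paper in fact invokes precisely this exact-triangle form of the structure theorem (from \cite[\S 5.2]{VI-shift}) in its proof of Theorem~\ref{thm:VImod}, so your outline matches the intended mechanism.
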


We denote, by $\delta(M)$, the degree of the polynomial $P$ as in the theorem above. We now state our application to $\VI$-modules (recall the notation $\fl$ introduced in \S\ref{ss:periodicity-D}):

\begin{theorem}	\label{thm:VImod}
  Assume that $p \neq \ell$. Let $M$ be a finitely generated $\VI$-module generated in degrees $\le t_0$ and related in degrees $\le t_1$. For each $t \ge 0$, there is a finitely presented $\bD$-module $K$ and a $\bD$-module map $i \colon \rR^t \Gamma(M) \to K$ such that the following holds:
  \begin{enumerate}[\rm \indent (a)]
  \item The map $i$ is an isomorphism in degrees $\ge t_0 + t_1$.
  \item $\lambda(K) \le \begin{cases}
      t + \delta(M) \qquad &\mbox{if } q \neq 2\\
      2t + \delta(M) \qquad &\mbox{if } q = 2
    \end{cases}$.
  \item $\epsilon(K) \le \begin{cases}
  \fl(t + 4\delta(M)) + 2 \delta(M) + 1 \qquad &\mbox{if } q \neq 2\\
  \fl(2t + 7\delta(M)) + 2 \delta(M) + 1 \qquad &\mbox{if } q = 2
  \end{cases}$.
  \end{enumerate}
  In particular, $\dim_{\bk} \rH^t(\bG_n, M(\bF_q^n))$ is periodic for $n \ge \max(\lambda(K), t_0 + t_1)$ with period $b_{\epsilon(K)}$.
\end{theorem}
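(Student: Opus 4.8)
The plan is to transport the argument for $\FI$-modules in \cite{periodicity} to the setting of $\VI$-modules, using the structure theory of $\VI$-modules (the $\VI$-analogue of the $\FI$-theory, via \cite{VI-shift}) together with the $\bD$-module machinery of \S\ref{ss:periodicity-D}. The first thing to establish is the behaviour of $\rR\Gamma$ on induced modules. For a $\VB$-module $W$ supported in degrees $\le d$ we have $\cI(W)(\bF_q^n) = \bigoplus_{U \subseteq \bF_q^n} W(U)$; grouping the summands by $e = \dim U$, Shapiro's lemma identifies $\rR^j\Gamma(\cI(W))_n$ with $\bigoplus_{e \le d} \rH^j(\bP_{e,n-e}, W(\bF_q^e))$, and since $p \ne \ell$ the unipotent radical of $\bP_{e,n-e}$ contributes nothing, so the K\"unneth formula rewrites this as $\bigoplus_{e \le d} \bigoplus_{a+b=j} \rH^a(\bG_e, W(\bF_q^e)) \otimes_{\bk} (\bE^b)_{n-e}$. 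Reading off the $\bD$-module structure (the transfer product acts through the $\bE^b$-factor) and invoking Corollary~\ref{cor:free-D-module}, I would conclude that $\rR^j\Gamma(\cI(W))$ is a finitely generated free $\bD$-module with $\epsilon = 0$ and $\lambda \le d + 2j$, improved to $d + j$ when $q \ne 2$. The same holds, by induction along the filtration, for any semi-induced $\VI$-module (one filtered by induced modules) with $d$ its top filtration-degree.

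Next I would invoke the $\VI$-analogue of the finite semi-induced coresolution of a finitely generated $\FI$-module. Write $\bar M = M/M_{\mathrm{tors}}$; then $\delta(\bar M) = \delta(M)$, and $M_{\mathrm{tors}}$ is supported in $\VI$-degrees $< t_0 + t_1$ (the Castelnuovo--Mumford regularity bound for a module presented in those degrees). By the structure theory, $\bar M$ admits a complex $J^\bullet \colon 0 \to J^0 \to \cdots \to J^{\delta(M)} \to 0$ of semi-induced modules, concentrated in cohomological degrees $0, \dots, \delta(M)$, with each $\delta(J^p) \le \delta(M)$, and a map $\bar M \to J^\bullet$ whose cone is torsion; one obtains it by repeatedly embedding the torsion-free part of the running cokernel into a semi-induced module with matching leading term at infinity, so that $\delta$ of the cokernel drops at each step. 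Applying $\rR\Gamma$ gives a first-quadrant spectral sequence with $\rE_1^{p,q} = \rR^q\Gamma(J^p) \implies \rH^{p+q}(\rR\Gamma(J^\bullet))$. Set $K := \rH^t(\rR\Gamma(J^\bullet))$, and let $i \colon \rR^t\Gamma(M) \to K$ be the composite of the map $\rR^t\Gamma(M) \to \rR^t\Gamma(\bar M)$ induced by $M \twoheadrightarrow \bar M$ with the map $\rR^t\Gamma(\bar M) \to K$ induced by $\bar M \to J^\bullet$. Each $\rE_1^{p,q}$ is finitely generated and free over the coherent ring $\bD$, so $K$ is finitely presented. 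Since $\rR^j\Gamma(N)_n = \rH^j(\bG_n, N(\bF_q^n))$ for every $\VI$-module $N$, the functor $\rR\Gamma$ sends torsion $\VI$-modules to complexes supported in bounded $\VI$-degrees; chasing the long exact sequences attached to $0 \to M_{\mathrm{tors}} \to M \to \bar M \to 0$ and to the cone of $\bar M \to J^\bullet$ then gives part (a), that $i$ is an isomorphism in degrees $\ge t_0 + t_1$.

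To extract the quantitative bounds, I would argue as follows. For part (b): $K = \rH^t(\rR\Gamma(J^\bullet))$ carries a finite filtration whose graded pieces are subquotients of the $\rE_1^{p,q}$ with $p + q = t$, so by Proposition~\ref{prop:lambda-epsiolon-invariants}(b),(c) we get $\lambda(K) \le \max_{p+q=t} \lambda(\rE_1^{p,q}) \le \max_{p+q=t} (\delta(M) + 2q)$, which equals $2t + \delta(M)$, and equals $t + \delta(M)$ when $q \ne 2$. For part (c): the spectral sequence is supported on $\{ (a,b) \colon 0 \le a \le \delta(M),\ b \ge 0 \}$, so Proposition~\ref{d:spectral} (which rests on Proposition~\ref{d:hom}) applies with its parameter $r$ equal to $\delta(M)$; all $\rE_1$-entries are free, so every $\epsilon_1^s$ vanishes and the bound there reduces to $\epsilon(K) \le \max_{|s - t| \le 2\delta(M)} \fl^s + 2\delta(M) + 1$. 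Combining this with $\fl^s = \max_{a+b=s} \fl(\lambda(\rE_1^{a,b}))$, the estimate $\lambda(\rE_1^{a,b}) \le \delta(M) + 2b$, and the subadditivity $\fl(x+y) \le \max(\fl(x), \fl(y)) + 1$ yields the asserted bound on $\epsilon(K)$.

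The periodicity conclusion would then be immediate: for $n \ge t_0 + t_1$, part (a) gives $\dim_{\bk} \rH^t(\bG_n, M(\bF_q^n)) = \dim_{\bk} \rR^t\Gamma(M)_n = \dim_{\bk} K_n$, while Proposition~\ref{prop:lambda-epsiolon-invariants}(a) shows $\dim_{\bk} K_n$ agrees whenever $n \equiv m \pmod{b_{\epsilon(K)}}$ and $n, m \ge \lambda(K)$; hence $n \mapsto \dim_{\bk} \rH^t(\bG_n, M(\bF_q^n))$ is periodic for $n \ge \max(\lambda(K), t_0 + t_1)$ with period $b_{\epsilon(K)}$. The main obstacle will be the existence of the semi-induced coresolution used in the second paragraph --- of length $\le \delta(M)$, with the torsion (the source of the "error" in part (a)) confined to degrees $< t_0 + t_1$ and the degrees of the terms controlled by $\delta(M)$. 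This is the $\VI$-counterpart of the $\FI$ structure theory used in \cite{periodicity} and should be available from the methods of \cite{VI-shift}; granted it, the remainder is the bookkeeping of \S\ref{ss:periodicity-D}, where the only delicate point is propagating the $\fl$-terms and the constant $2\delta(M) + 1$ through Proposition~\ref{d:spectral}.
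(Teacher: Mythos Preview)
Your proposal is essentially correct and follows the same strategy as the paper: reduce to a complex of induced modules via the structure theory of \cite{VI-shift}, compute $\rR\Gamma$ on induced modules using Corollary~\ref{cor:free-D-module}, and feed the resulting spectral sequence into Proposition~\ref{d:spectral}. Two cosmetic differences are worth noting. First, where you pass to $\bar M = M/M_{\mathrm{tors}}$ and invoke a one-sided semi-induced coresolution $J^\bullet$ concentrated in degrees $[0,\delta(M)]$, the paper instead cites directly from \cite[\S 5.2]{VI-shift} an exact triangle $T \to M \to F \to{}$ with $F$ a complex of induced modules supported in $[-\delta(M),\delta(M)]$ and $T$ supported in finitely many degrees; this sidesteps your ``main obstacle'' by taking the triangle as given rather than rebuilding the coresolution, and the bound on the support of $T$ comes from \cite{VI-homology}. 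Second, your identification of $\rR^j\Gamma(\cI(W))$ via Shapiro plus K\"unneth is exactly what the paper packages as the strong monoidality $\rR^\bullet\Gamma(W \otimes \bA) \cong \rR^\bullet\Gamma(W) \otimes_\bk \bE^\bullet$. Neither difference affects the numerics, and the bookkeeping through Propositions~\ref{prop:lambda-epsiolon-invariants} and~\ref{d:spectral} is the same.
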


\begin{remark}
	The proof below is based on  \cite[Theorem~4.17]{periodicity} for $\FI$-modules, whose proof contains an unjustified statement. Our proof below can be adapted to give an alternate proof of \cite[Theorem~4.17]{periodicity} but with possibly worse bound on $\epsilon$. 
\end{remark}

\begin{proof}
  By \cite[\S 5.2]{VI-shift}, we have an exact triangle of the form \[ T \to M \to F \to \] such that $T$ is a finite complex of $\VI$-modules supported in finitely many degrees and $F$ is a complex
  \[
    \cdots \to F^{-1} \to F^0 \to F^1 \to \cdots
  \]
  of finitely generated induced modules each generated  in degrees $\le \delta(M)$ such that $F^p = 0$ for $\abs{p} > \delta(M)$. By \cite{VI-homology}, the cohomologies of $T$ are supported in degrees $< t_0 + t_1$. Thus $\rR^t\Gamma(T)$ is supported in degrees $< t_0 + t_1$ as well for each $t$. Set $K = \rR^t \Gamma(F)$. We conclude that the natural map $i \colon \rR^t \Gamma(M) \to K$, induced by the triangle above, is an isomorphism in degrees $\ge t_0 + t_1$. 
	
  Denote, by $\rR^{\bullet} \Gamma$, the functor $ \bigoplus_{t \ge 0} \rR^t \Gamma$. Then $\rR^{\bullet} \Gamma$ is easily seen to be strong monoidal. Thus, for an induced module $\cI(W)$, we have
  \[
    \rR^{\bullet} \Gamma (\cI(W)) = \rR^{\bullet} \Gamma(W \otimes \bA) = \rR^{\bullet}\Gamma (W) \otimes_{\bk}\rR^{\bullet} \Gamma(\bA)  =   \rR^{\bullet}\Gamma (W) \otimes_{\bk} \bE^{\bullet} .
  \]
  In particular, if $W$ is supported in degrees $\le \delta(M)$, then by Corollary~\ref{cor:free-D-module} we see that
  \[
    \lambda(\rR^{b} \Gamma (\cI(W))) \le \begin{cases}
      b + \delta(M) \qquad &\mbox{if } q \neq 2\\
      2b + \delta(M) \qquad &\mbox{if } q = 2
    \end{cases}.
  \]
  Part (b) now follows immediately from the previous paragraph and Proposition~\ref{prop:lambda-epsiolon-invariants}(b).
	
  We have a spectral sequence:
  \[
    \rE^{a,b}_1 = \rR^b \Gamma (F^a) \implies \rR^{a+b}\Gamma(F).
  \]
  We now appeal to Proposition~\ref{d:spectral}. In the notation of that proposition, we have
  \[
    \fl^t = \max_{a+b=t} \fl(\lambda(\rR^b\Gamma(F^a))) \le \begin{cases}
      \fl(t + 2\delta(M)) \qquad &\mbox{if } q \neq 2\\
      \fl(2t + 3\delta(M)) \qquad &\mbox{if } q = 2
    \end{cases},
  \]
  and $\epsilon^1_t = 0$ (by Corollary~\ref{cor:free-D-module}). This immediately implies Part (c).
\end{proof}

\subsection{Periodicity in the cohomology of unipotent Specht modules}

We apply the theorem above to unipotent Specht modules. As in James \cite{james}, by a composition of $d$ we mean a sequence $\mu = (\mu_1, \mu_2, \ldots)$ of non-negative numbers, whose sum $|\mu|$ is $d$. We let $\bM_{\mu}$  to be the unipotent Specht module  corresponding to a composition $\mu = (\mu_1, \mu_2, \ldots)$ (see \cite[Definition 11.11]{james} where it is denoted $S_\lambda$). We denote the permutation module of $\bG_d$ on the corresponding parabolic subgroup by $\bP_{\mu}$. This is the permutation representation on flags of the form
\[
  \bF_q^d =V_0 \supseteq V_1 \supseteq V_2 \supseteq \cdots \supseteq V_{\ell(\mu)} = 0
\]
such that $\dim(V_{i-1}/V_i)=\mu_i$. Note that $\bM_{\mu} \subset \bP_{\mu}$, and  if $\lambda = \lambda_1 \ge \lambda_2 \ge \cdots$ is the partition of $d$ obtained by rearranging the parts $\mu$, then $\bM_{\mu}\cong \bM_{\lambda}$ (see \cite[Theoem~16.1]{james}). 

For $n \ge |\mu|$, let $\mu[n]$ denote the composition $(n - \abs{\mu}, \mu)$. Given integers $1 \le r \le \ell(\mu)$ and $0 \le i \le \mu_r$, we also set
\[
  \mu^{r,i} = (\mu_1,\dots,\mu_{r-1}, \mu_r + \mu_{r+1}-i, i, \mu_{r+2}, \dots, \mu_{\ell(\mu)})
\]
and define a map $\psi_{r,i} \colon \bP_\mu \to \bP_{\mu^{r,i}}$ which sends the flag $V_\bullet$ to the sum of all flags where $V_r$ is replaced by any subspace $W_r$ such that $\dim(W_r / W_{r+1}) = i$. We define a set
\[
  K(\mu) = \{ (r,i) \mid 2 \le r \le \ell(\mu), \quad 0 \le i \le \mu_r-1\}.
\]
By \cite[Theorem 15.18]{james}, if $\mu$ is a partition, we have
\[
  \bM_\mu = \bigcap_{(r,i) \in K(\mu)} \ker \psi_{r-1,i}.
\]
Note that $K(\mu[e]) = K(\mu[e'])$ for any $e,e' \ge |\mu|$ and that for $(r,i) \in K(\mu[e])$, if $e' \ge e$, we have $\mu[e']^{r,i} = \mu[e]^{r,i} + (e'-e)$ where $+$ denotes adding $e'-e$ to the first part of $\mu[e]^{r,i}$.

\begin{proposition}
  Let $\mu$ be a partition of $d$. Then there exists a $\VI$-module $\bL_{\mu}$ generated in degrees $\le 2d + 1$, related in degrees $\le 2d + 2$ such that $\bL_{\mu}(\bF_q^n) \cong \bM_{\mu[n]}$ for each $n \ge d+\mu_1$.
\end{proposition}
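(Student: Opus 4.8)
The plan is to realize each permutation module $\bP_{\mu[n]}$ as the value of an explicitly constructed $\VI$-module, and then cut out $\bM_{\mu[n]}$ as an intersection of kernels of the James maps $\psi_{r,i}$, checked to be compatible with the $\VI$-structure for $n$ large enough.

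\textbf{Step 1: A $\VI$-module computing the permutation modules.} For a composition $\nu=(\nu_1,\dots,\nu_k)$ of $d$, consider the functor $\bQ_\nu$ on $\VI$ sending $\bF_q^n$ to the free $\bk$-module on the set of flags $\bF_q^n = V_0 \supseteq V_1 \supseteq \cdots \supseteq V_k = V_{\mathrm{fixed}}$ where $\dim(V_{i-1}/V_i)=\nu_i$ for $i<k$ and $V_k$ ranges over subspaces of some fixed complement — concretely, one wants the $\VI$-module whose value at $\bF_q^n$ is the permutation module on the parabolic of type $\mu[n]=(n-d,\mu)$, i.e. flags $\bF_q^n = V_0 \supseteq V_1 \supseteq \cdots \supseteq V_{\ell(\mu)}=0$ with $\dim(V_{i-1}/V_i)=(\mu[n])_i$. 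Since $\bP_\mu \cong \bP_{(d,\ell(\mu)\text{ zeros})}$-type data plus a flag with ``big'' bottom step $n-d$, this is an induced $\VI$-module: $\bP_{\mu[n]}$ is obtained by inducing up the $\bG_n$-set of $d$-dimensional subspaces $V_1$ together with an internal flag of type $\mu$ inside $V_1$. Thus $\bQ_\mu := \cI(W_\mu)$ where $W_\mu$ is the $\VB$-module concentrated in degree $d$ equal to the permutation representation of $\bG_d$ on flags of type $\mu$; it is generated (and related) in degree $d$, and $\bQ_\mu(\bF_q^n) \cong \bP_{\mu[n]}$ for $n \ge d$.

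\textbf{Step 2: The James maps as $\VI$-module maps.} For $(r,i) \in K(\mu[n])$, the map $\psi_{r-1,i} \colon \bP_{\mu[n]} \to \bP_{\mu[n]^{r-1,i}}$ only modifies interior steps of the flag (replacing $V_{r-1}$ by subspaces $W_{r-1}$ with prescribed codimension data), and by the stabilization remark preceding the proposition, $\mu[n]^{r-1,i} = \mu'[n]$ for a fixed composition $\mu'$ (of $d+\cdots$ — in fact of $d$, after accounting for the ``big'' first part) once $n$ is large. Since the modification happens inside the $d$-dimensional part $V_1$ — or, when $r-1=1$, involves the big step but in a way that commutes with $\VI$-morphisms — $\psi_{r-1,i}$ is induced from a $\VB$-module map $W_\mu \to W_{\mu'}$ in degree $d$ (or a slightly larger fixed degree). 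Hence $\psi_{r-1,i}$ is a morphism of induced $\VI$-modules for $n$ sufficiently large; I expect ``sufficiently large'' to be $n \ge d + \mu_1$ exactly because that is the threshold at which the formula $\bM_\mu = \bigcap \ker\psi_{r-1,i}$ from \cite[Theorem 15.18]{james} is valid and $K(\mu[n])$ has stabilized.

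\textbf{Step 3: Assemble $\bL_\mu$ and bound its generation/relation degrees.} Define $\bL_\mu = \bigcap_{(r,i)\in K(\mu)} \ker(\psi_{r-1,i} \colon \bQ_\mu \to \bQ_{\mu^{r-1,i}})$, a $\VI$-submodule of an induced module. By Step 2 and \cite[Theorem 15.18]{james}, $\bL_\mu(\bF_q^n) \cong \bM_{\mu[n]}$ for $n \ge d+\mu_1$. To get the degree bounds, observe that $\bL_\mu$ is the kernel of a single map $\bQ_\mu \to \bigoplus_{(r,i)\in K(\mu)} \bQ_{\mu^{r-1,i}}$ of induced modules generated in degree $\le d$ (the summands $\bQ_{\mu^{r-1,i}}$ are induced in degree $d$, possibly $d+1$ once we track the big-step bookkeeping carefully). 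A kernel of a map between $\VI$-modules generated in degree $\le a$ is generated in degree $\le a+1$ and related in degree $\le a+2$, by the standard resolution/shift argument for $\VI$-modules in non-describing characteristic — here using the analog of the bound in \cite{VI-homology}/\cite{VI-shift} that controls homological degrees of kernels. Being slightly generous with the big-step bookkeeping gives generation in degrees $\le 2d+1$ and relation in degrees $\le 2d+2$.

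\textbf{Main obstacle.} The crux is Step 2: verifying that the James maps $\psi_{r-1,i}$ genuinely come from $\VB$-module maps between the induced modules, i.e. that they are natural with respect to arbitrary linear injections $\bF_q^n \hookrightarrow \bF_q^{n'}$ and not merely equivariant for each $\bG_n$ separately. The subtlety is the index $r-1=1$ case, where the map touches the ``ambient'' big step $V_0 \supseteq V_1$ rather than a step internal to the fixed $d$-dimensional piece; one has to check the combinatorial identity $\mu[n']^{1,i} = \mu[n]^{1,i} + (n'-n)$ from the pre-proposition remarks translates into strict naturality. I expect this to reduce to a direct but slightly fiddly comparison of double-coset data, analogous to Lemma~\ref{lem:mid-portion}. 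Once naturality is in hand, Steps 1 and 3 are routine given the structural results on $\Mod_{\VI}$ already cited.
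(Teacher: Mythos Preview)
Your construction of $\bL_\mu$ as the kernel of the James maps out of $\cI(\bP_\mu)$ is exactly the paper's approach, and the threshold $n\ge d+\mu_1$ is identified for the same reason (it is precisely when $\mu[n]$ is a partition, so that James's kernel description applies). As for your ``Main obstacle'', the paper does not treat naturality as a difficulty at all: it simply asserts that from the explicit description of $\psi_{r,i}$ on flags, the induced maps in each degree coincide with the James maps, and moves on. No double-coset computation of the kind you anticipate is carried out.

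The genuine gap is in Step~3. Your assertion that the kernel of a map between $\VI$-modules generated in degree $\le a$ is automatically generated in degree $\le a+1$ and related in degree $\le a+2$ is not a theorem, and neither \cite{VI-homology} nor \cite{VI-shift} supplies such a statement; the fudge ``being slightly generous with big-step bookkeeping gives $2d+1$'' lands on the correct number but without an argument. What the paper actually does is bring in the \emph{cokernel} $C$ of the map $\cI(\bP_\mu)\to\bigoplus_{(r,i)}\cI(\bP_{\mu^{r,i}})$. Since source and target are induced in degree $\le d$, the module $C$ is presented (generated and related) in degree $\le d$, and then \cite[Theorem~1.1(b)]{VI-homology} gives $\reg(C)\le 2d-1$. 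Dimension-shifting along the four-term exact sequence, using that induced modules have vanishing higher $\Tor^{\bA}$, yields
\[
\deg\Tor_i^{\bA}(\bk,\bL_\mu)\ \le\ \max\bigl(d,\ \deg\Tor_{i+2}^{\bA}(\bk,C)\bigr)\ \le\ 2d+1+i,
\]
hence generation in degree $\le 2d+1$ and relations in degree $\le 2d+2$. The extra additive $d$ in the bound therefore comes from the regularity of the cokernel, not from any bookkeeping involving the first part of $\mu[n]$.
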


\begin{proof}
  First note that $\cI(\bP_{\mu}) = \bigoplus_{n \ge d} \bP_{\mu[n]}$. Define a map
  \[
    \cI(\bP_\mu) \to \bigoplus_{(r,i) \in K(\mu[d])} \cI(\bP_{\mu^{r,i}})
  \]
  using the maps $\psi_{r,i}$. From the explicit description of $\psi_{r,i}$, the induced maps in each degree are also the maps $\psi_{r,i}$. Denote the kernel and the cokernel by $\bL_{\mu}$ and $C$, respectively. By the above discussion, we have $\bL_{\mu}(\bF_q^n) \cong \bM_{\mu[n]}$ whenever $n \ge d+\mu_1$ (i.e., when $\mu[n]$ is a partition). The exact sequence
  \[
    0 \to \bL_{\mu} \to  \cI(\bP_{\mu})  \to \bigoplus_{(r,i) \in K(\mu[d])} \cI(\bP_{\mu^{r,i}}) \to C \to 0
  \]
  and a spectral sequence argument shows that
  \[
    \deg \Tor_i^{\bA}(\bk, \bL_{\mu}) \le \max(d, \deg \Tor_{i+2}^{\bA}(\bk, C)).
  \]
  By \cite[Theorem~1.1(b)]{VI-homology}, we conclude that $\reg(C) \le 2d-1$. Thus $\bL_{\mu}$ is generated in degrees $\le 2d+1$, and related in degrees $\le 2d+2$.
\end{proof}

\begin{theorem} 
Assume that $\ell \neq p$.  Let $\mu$ be a composition of $d$, and let $s \ge 0$ be the smallest integer such that \[ s \ge \begin{cases}
\log_{\ell}(2t + 7d) &\mbox{if } q = 2\\
\log_{\ell}(t + 4d) &\mbox{if } q \neq 2.
\end{cases}  \] Then $n \mapsto \dim_{\bk} \rH^t(\bG_{n}, \bM_{\mu[n]})$ is periodic in the range $n \ge \max(d + 2t, 4d + 3)$ with period $w \ell^{s + 2d+1}$.
\end{theorem}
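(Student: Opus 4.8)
\emph{Strategy.} The plan is to deduce this from Theorem~\ref{thm:VImod}, applied to the $\VI$-module $\bL_\mu$ produced by the preceding proposition, after reducing to the partition case and tracking all the relevant constants. First I would replace the composition $\mu$ by a partition: if $\lambda = (\lambda_1 \ge \lambda_2 \ge \cdots)$ denotes the sorted rearrangement of $\mu$, then, since $\lambda_1 \le d$, the compositions $\mu[n]$ and $\lambda[n]$ are rearrangements of one another whenever $n - d \ge \lambda_1$, hence for all $n \ge 2d$, and so $\bM_{\mu[n]} \cong \bM_{\lambda[n]}$ there by \cite[Theorem~16.1]{james}. As $2d \le 4d + 3$, this covers the whole range in the statement, so we may and do assume $\mu$ is a partition.

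Next, the preceding proposition furnishes a finitely generated $\VI$-module $\bL_\mu$, generated in degrees $\le 2d+1$, related in degrees $\le 2d+2$, with $\bL_\mu(\bF_q^n) \cong \bM_{\mu[n]}$ as $\bG_n$-modules for all $n \ge d + \mu_1$, in particular for $n \ge 2d$. The only extra computation needed is the bound $\delta(\bL_\mu) \le d$. By construction $\bL_\mu$ embeds in $\cI(\bP_\mu) = \bigoplus_{n \ge d} \bP_{\mu[n]}$, and $\dim_\bk \bP_{\mu[n]}$ is the number of flags of type $\mu[n]$ in $\bF_q^n$, which equals ${n \brack d}_q$ times the ($n$-independent) number of flags of type $(\mu_1, \mu_2, \dots)$ in $\bF_q^d$; since ${n \brack d}_q$ is a polynomial in $q^n$ of degree $d$, this gives $\delta(\cI(\bP_\mu)) = d$, and comparing dimensions in each degree yields $\delta(\bL_\mu) \le d$.

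Now I would apply Theorem~\ref{thm:VImod} to $M = \bL_\mu$ with $t_0 = 2d+1$, $t_1 = 2d+2$, $\delta(M) \le d$. Using that $\fl$ is nondecreasing, this gives a finitely presented $\bD$-module $K$ with $\lambda(K) \le d + 2t$ (in both the $q = 2$ and $q \neq 2$ cases), with $\epsilon(K) \le \fl(t+4d) + 2d + 1$ if $q \neq 2$ and $\epsilon(K) \le \fl(2t+7d) + 2d + 1$ if $q = 2$, and with $n \mapsto \dim_\bk \rH^t(\bG_n, \bL_\mu(\bF_q^n))$ periodic of period $b_{\epsilon(K)}$ for $n \ge \max(\lambda(K), t_0 + t_1) = \max(\lambda(K), 4d+3)$. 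Since $\bL_\mu(\bF_q^n) \cong \bM_{\mu[n]}$ for $n \ge 2d$ and $\lambda(K) \le d + 2t$, this already yields periodicity of $n \mapsto \dim_\bk \rH^t(\bG_n, \bM_{\mu[n]})$ on $n \ge \max(d + 2t, 4d + 3)$. It remains to see $b_{\epsilon(K)} \mid w\ell^{s + 2d+1}$; as the $b_i$ form a divisibility chain, it is enough to exhibit an index $m \ge \epsilon(K)$ with $b_m \mid w\ell^{s+2d+1}$. If $w > 1$, then $b_{s+1} = w\ell^s \ge \ell^s$, which by the choice of $s$ is $\ge t + 4d$ (resp.\ $\ge 2t + 7d$), so $\fl(t+4d) \le s+1$ (resp.\ $\fl(2t+7d) \le s+1$) and hence $\epsilon(K) \le s + 2d + 2$, while $b_{s+2d+2} = w\ell^{s+2d+1}$. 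If $w = 1$, which forces $q \neq 2$, then $b_r = \ell^r$, so $\fl(t+4d) = s$ directly, $\epsilon(K) \le s + 2d + 1$, and $b_{s+2d+1} = \ell^{s+2d+1} = w\ell^{s+2d+1}$. In either case $b_{\epsilon(K)} \mid w\ell^{s+2d+1}$, so the period $b_{\epsilon(K)}$ enlarges to $w\ell^{s+2d+1}$.

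No single step is hard: the substance is in Theorem~\ref{thm:VImod} and in the construction of $\bL_\mu$, both of which may be assumed. I expect the main obstacle to be the bookkeeping --- checking that the generation and relation degrees of $\bL_\mu$, the bound $\delta(\bL_\mu) \le d$, and the interplay between $\fl$, the integer $s$, and the sequence $b_\bullet$ (in particular the $w = 1$ versus $w > 1$ distinction) combine to give exactly the asserted range $\max(d + 2t, 4d+3)$ and period $w\ell^{s+2d+1}$.
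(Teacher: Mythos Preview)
Your proposal is correct and follows essentially the same approach as the paper: apply Theorem~\ref{thm:VImod} to the $\VI$-module $\bL_\mu$ from the preceding proposition and read off the constants. The only substantive difference is that the paper computes $\delta(\bL_\mu)=d$ via the hook-length formula for $\dim_\bk \bM_{\mu[n]}$, whereas you obtain the (sufficient) bound $\delta(\bL_\mu)\le d$ from the embedding $\bL_\mu\hookrightarrow\cI(\bP_\mu)$; your version also makes explicit the reduction from compositions to partitions and the $w=1$ versus $w>1$ case split that the paper leaves implicit.
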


\begin{proof}
	Let $\bL_{\mu}$ be the $\VI$-module as in the previous proposition. By the hook-length formula, it is easy to check that the dimension of $\bL_{\mu}(\bF_q^n)$ is eventually a polynomial in $q^n$ of degree $d$. The assertion now follows immediately from Theorem~\ref{thm:VImod}.
\end{proof}

\begin{remark}
In \cite[Theorem 7.2.4]{harman-thesis} Harman has proven equivalences for the categories of unipotent representations for finite general linear groups which imply many periodicity statements about the structure of modules. As far as we can tell, this does not imply our result on cohomology nor does it follow from it.
\end{remark}

\end{document}